\def\ps@pprintTitle{%
 \let\@oddhead\@empty
 \let\@evenhead\@empty
 \def\@oddfoot{}%
 \let\@evenfoot\@oddfoot}
\newtheorem{thm}{Theorem}
\newtheorem{lem}[thm]{Lemma}
\newtheorem{prop}[thm]{Proposition}
\newtheorem{cor}[thm]{Corollary}
\newtheorem{obs}[thm]{Observation}
\newtheorem{conj}[thm]{Conjecture}
\newcommand{\st}{\colon\ }
\journal{Discrete Mathematics}
\begin{document}

\begin{frontmatter}


\author[FC]{Sebasti\'an Gonz\'alez Hermosillo de la Maza\corref{cor1}}
\ead{sghm@matem.unam.mx}

\author[FC]{C\'esar Hern\'andez-Cruz}
\ead{chc@ciencias.unam.mx}

\address[FC]{Facultad de Ciencias, UNAM\\
						Av. Universidad 3000, Circuito Exterior S/N\\
						Delegaci\'on Coyoac\'an, C.P. 04510\\
						Ciudad Universitaria, D.F., M\'exico}

\cortext[cor1]{Corresponding Author}

\title{On the existence of $3$- and $4$-kernels in digraphs}


\author{}

\address{}

\begin{abstract}
Let $D = (V(D), A(D))$ be a digraph.   A subset $S \subseteq V(D)$ is
$k$-{\em independent} if the distance between every pair of vertices of $S$ is at
least $k$, and it is $\ell$-{\em absorbent} if for every vertex $u$ in $V(D) \setminus S$
there exists $v \in S$ such that the distance from $u$ to $v$ is less than or equal to
$\ell$.   A $k$-{\em kernel} is a $k$-independent and $(k-1)$-absorbent set.   A
kernel is simply a $2$-kernel.

A classical result due to Duchet states that if every directed cycle in a digraph $D$ has at
least one symmetric arc, then $D$ has a kernel.   We propose a conjecture generalizing
this result for $k$-kernels and prove it true for $k = 3$ and $k = 4$.
\end{abstract}

\begin{keyword}
kernel \sep $k$-kernel \sep kernel-perfect digraph \sep $(k,l)$-kernel


\MSC 05C20

\end{keyword}

\end{frontmatter}


\section{Introduction}
\label{sec:Intro}

Since their introduction by von Neumann and Morgenstern in \cite{VNM} in
the context of winning strategies in game theory, digraph kernels have been
widely studied in different contexts.   Kernels in digraphs gained a lot of
attention for their relation with the Strong Perfect Graph Conjecture (now the
Strong Perfect Graph Theorem), but their applications ranges from list
edge-colourings of graphs to Model Theory in Mathematical Logic \cite{BGW}.
Chv\'atal proved in \cite{C} that the problem of determining whether a digraph
has a kernel (the {\em kernel problem}) is $\mathcal{NP}$-complete.

Kwa\'snik and Borowiecki introduced in \cite{K1} the concept of $(k,\ell)$-kernel of 
a digraph $D$ as a $k$-independent and $\ell$-absorbing subset of $V(D)$.   It is
easily observed that arbitrary choices of $k$ and $\ell$ do not usually lead to problems
similar to the kernel problem.   As an example, consider the following basic result for
kernels.   Every acyclic digraph has a unique kernel, and this kernel can be recursively
constructed.   But, for $k > \ell - 1$, directed paths of length greater than $\ell$ do not
have a $(k,\ell)$-kernel.   Observing that a kernel is a $(2, 1)$-kernel, it is natural to
consider the special case of $(k, k-1)$-kernels, which are known as $k$-kernels, as
they have similar properties to those of the usual kernels.   In \cite{HH}, Hell and
Hern\'andez-Cruz proved that determining whether a cyclically $3$-partite digraph
with circumference $6$ has a $3$-kernel is an $\mathcal{NP}$-complete problem,
and hence, the kernel problem remains $\mathcal{NP}$-complete even for $3$-colorable
digraphs.

Finding sufficient conditions for the existence of $k$-kernels has been a fruitful line
of work.   For the $2$-kernel case there are a lot of easy to verify and elegant such conditions,
for example, acyclic digraphs, transitive digraphs, symmetric digraphs, bipartite digraphs,
and digraphs without odd directed cycles have a kernel.   A particularly nice and useful
theorem was proved by Duchet in 1980, \cite{Duchet}.

\begin{thm}\label{Duchet}
If every directed cycle in $D$ has at least one symmetric arc, then $D$ is kernel-perfect.
\end{thm}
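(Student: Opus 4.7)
The property ``every directed cycle of $D$ contains a symmetric arc'' is inherited by every induced subdigraph, so it suffices to prove that any such $D$ has a kernel; applying the argument to each induced subdigraph then yields kernel-perfection. I would proceed by induction on $|V(D)|$, with the trivial base case $|V(D)| \leq 1$.

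For the inductive step, my strategy is to isolate a non-empty independent set $S \subseteq V(D)$ that can be ``peeled off'' the digraph. Concretely, I would look for an independent $S \neq \emptyset$ with the property that every out-neighbor of a vertex of $S$ has an arc back into $S$; equivalently, no arc of $D$ leaves $S$ toward $V(D) \setminus (S \cup N^-(S))$. Granted such an $S$, the subdigraph $D' := D - (S \cup N^-(S))$ still satisfies the hypothesis and is strictly smaller, so by induction it has a kernel $K'$, and I claim $K := S \cup K'$ is a kernel of $D$. Independence holds because $S$ and $K'$ are each independent and, by construction, there are no arcs in $D$ between $S$ and $K' \subseteq V(D')$ (no arcs from $S$ to $V(D')$ by the defining property of $S$, no arcs from $V(D')$ to $S$ because $N^-(S)$ was deleted). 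Absorbency is also immediate: every vertex of $V(D) \setminus K$ is either in $N^-(S)$ and absorbed by $S$, or in $V(D') \setminus K'$ and absorbed by $K'$.

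The main obstacle is the construction of such an $S$, and this is exactly where the hypothesis enters the argument. If $D$ is acyclic, any sink suffices. Otherwise, the hypothesis applied to an arbitrary directed cycle yields a symmetric arc $xy, yx$, and $S_0 = \{x\}$ is a natural starting candidate. Whenever the required property fails, say because some out-neighbor $z$ of $S$ has no arc back into $S$, I would modify $S$ by swapping or augmenting using $z$ and the symmetric partners furnished by the hypothesis. The delicate point is showing that this modification terminates: any infinite sequence of augmentations would trace out a directed cycle of $D$ whose arcs are all asymmetric, contradicting the hypothesis. Making this termination argument rigorous — for instance by taking $S$ to be a suitably maximal independent set in the spanning subgraph of symmetric arcs and then carefully performing local swaps — is the technical heart of the proof and the step I expect to require the most care.
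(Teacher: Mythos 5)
The paper does not prove this statement at all: it is Duchet's 1980 theorem, imported by citation, so there is no internal proof to compare against. Judged on its own terms, your reduction is the standard one (it is Neumann-Lara's semikernel argument): a nonempty independent set $S$ such that every out-neighbour of $S$ sends an arc back into $S$ is precisely a nonempty \emph{semikernel}, the hypothesis is hereditary as you say, and your peeling of $S \cup N^-(S)$ followed by induction on $|V(D)|$ is correct as written, including the verifications of independence and absorbency of $K = S \cup K'$.

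The gap is exactly where you flag it: you never construct $S$, and the sketch you offer does not work as stated. Taking $S_0 = \{x\}$ for $x$ an endpoint of a symmetric arc on some cycle gives nothing, since $x$ may have other, asymmetric out-arcs; the ``swap/augment until termination'' process is not defined; and a maximal independent set in the spanning subgraph of symmetric arcs need not have the required property either (such a vertex can still have an asymmetric out-arc to a vertex with no arc back into the set). Fortunately the existence of $S$ is a one-line matter that you were circling: let $\mathrm{Asym}(D)$ denote the spanning subdigraph of all asymmetric arcs of $D$. A directed cycle of $\mathrm{Asym}(D)$ would be a directed cycle of $D$ with no symmetric arc, so $\mathrm{Asym}(D)$ is acyclic and hence has a vertex $v$ of out-degree zero there, i.e.\ a vertex all of whose out-arcs in $D$ are symmetric. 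Then $S = \{v\}$ satisfies your condition ($N^+(v) \subseteq N^-(v)$), and your induction goes through. With that single observation inserted, your argument is complete and coincides with the classical proof of Duchet's theorem.
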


Observe that not all of these conditions have an analogue for $k$-kernels when $k \ge 3$.
Knowing that bipartite digraphs have a kernel, it seems natural to ask whether $3$-colorable
digraphs have a kernel.   Sadly, this is not true; moreover, as mentioned above, it is
$\mathcal{NP}$-complete to determine whether a cyclically $3$-partite digraph has a
$3$-kernel, \cite{HH}.   The first interesting generalization to $k$-kernels of a known result
for kernels is due to Kwa\'snik, who proved in 1981, \cite{K2}, that if all directed cycles of a
strongly connected digraph $D$ have length $\equiv 0$ (mod $k$), then $D$ has a
$k$-kernel.

For the last 34 years, the study of sufficient conditions for the existence of $k$-kernels has
been focused on certain well-behaved families of digraphs.   The following families of
digraphs have been either proved to have a $k$-kernel for every integer $k \ge 3$, or their
members having a $k$-kernel have been characterized:   Multipartite tournaments
\cite{GS-HC1}, $k$-transitive digraphs \cite{HC-GS}, $k$-quasi-transitive digraphs
\cite{HC-GS, Wang}.   So, a general, non structure-dependent sufficient condition for the
existence of $k$-kernels is still missing for every integer $k \ge 3$.

For undefined terms, we refer the reader to \cite{BM}.  All digraphs considered here are
finite, without loops and without parallel arcs in the same directon.   A subset $S \subseteq
V(D)$ is $k$-{\em independent} if the distance between every pair of vertices of $S$ is at
least $k$, and it is $\ell$-{\em absorbent} if for every vertex $u$ in $V(D) \setminus S$
there exists $v \in S$ such that the distance from $u$ to $v$ is less than or equal to
$\ell$.   A $(k,\ell)$-{\em kernel} is a $k$-independent and $k$-absorbent set.   A
$k$-{\em kernel} is a $(k,k-1)$-kernel and a kernel is simply a $2$-kernel.

If $W = (x_0 \dots, x_n)$ is a (directed) walk in a digraph $D$, for $i < j$, $x_i W x_j$
will denote the walk $(x_i, x_{i+1}, \dots, x_{j-1}, x_j)$.   Union of walks will be denoted
by concatenation or with $\cup$.   The length of the walk $W$ will be denoted by $\left\|
W \right\|$.   All paths and cycles will be considered to be directed unless otherwise
stated.   A $k$-cycle is a cycle of length $k$.   In particular $3$-cycles will be often
called simply triangles.

If $D$ is a digraph, the $k$-closure of $D$, $\mathcal{C}^k (D)$ is the digraph with
vertex set $V(\mathcal{C}^k (D)) = V(D)$ and such that $(u,v) \in A(\mathcal{C}^k (D))$ if
and only if $d_D (u,v) \le k$.

We propose the following conjecture to generalize Theorem \ref{Duchet} to $k$-kernels.

\begin{conj} \label{Conj-k}
If every directed cycle $B$ in a digraph $D$ has at least $ \frac{k-2}{k-1} \left\| B \right\|  + 1$
symmetric arcs, then $D$ has a $k$-kernel.
\end{conj}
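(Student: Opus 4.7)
The plan is to deduce Conjecture~\ref{Conj-k} from Theorem~\ref{Duchet} applied to the $(k-1)$-closure $\mathcal{C}^{k-1}(D)$. Note first that $S \subseteq V(D)$ is a $k$-kernel of $D$ if and only if $S$ is a kernel of $\mathcal{C}^{k-1}(D)$: $k$-independence in $D$ is precisely $2$-independence in $\mathcal{C}^{k-1}(D)$ (no arc exists between two vertices of $S$ in $\mathcal{C}^{k-1}(D)$ exactly when the $D$-distance between them is at least $k$ in both directions), and $(k-1)$-absorbence in $D$ is precisely $1$-absorbence in $\mathcal{C}^{k-1}(D)$. Thus it suffices to prove that every directed cycle in $\mathcal{C}^{k-1}(D)$ contains a symmetric arc, since then Theorem~\ref{Duchet} yields the desired kernel.

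Suppose, toward a contradiction, that $C = (v_1, v_2, \ldots, v_n, v_1)$ is a directed cycle in $\mathcal{C}^{k-1}(D)$ with no symmetric arc; then $d_D(v_i, v_{i+1}) \leq k-1$ and $d_D(v_{i+1}, v_i) \geq k$ for every $i$ (indices mod $n$). For each $i$, fix a shortest $v_i v_{i+1}$-path $P_i$ in $D$, of length $m_i \leq k-1$. The central local observation is that $P_i$ contains at most $m_i - 1$ symmetric arcs of $D$: otherwise every arc of $P_i$ would be symmetric, and reversing them one by one would exhibit a $v_{i+1} v_i$-walk of length $m_i \leq k-1$, contradicting $d_D(v_{i+1}, v_i) \geq k$.

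Concatenate to form the closed walk $W = P_1 P_2 \cdots P_n$, of length $\|W\| = \sum_i m_i \leq (k-1)n$. I would decompose the arc traversals of $W$ into simple directed cycles $B_1, \ldots, B_r$ of $D$ (always possible for a nontrivial closed walk), with $r \geq 1$. The number of symmetric arc traversals in $W$ then admits two estimates: from the $P_i$ side it is at most $\sum_i (m_i - 1) = \|W\| - n$; from the $B_j$ side, applying the hypothesis of Conjecture~\ref{Conj-k} to each cycle, it is at least $\sum_{j=1}^{r} \bigl( \tfrac{k-2}{k-1}\|B_j\| + 1 \bigr) = \tfrac{k-2}{k-1}\|W\| + r$. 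Combining yields $\|W\| \geq (k-1)(n+r)$, which together with $\|W\| \leq (k-1)n$ forces $r \leq 0$, contradicting $r \geq 1$.

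The main obstacle I anticipate is the careful bookkeeping in the cycle decomposition: one must ensure that each $B_j$ is genuinely a simple cycle of $D$ so that the hypothesis applies termwise, and that a symmetric arc of $D$ traversed several times by $W$ is counted consistently on both sides of the inequality. For $k = 4$ the argument admits an attractive structural shortcut: since a $2$-cycle cannot carry $\lceil \tfrac{7}{3} \rceil = 3$ symmetric arcs, the hypothesis forbids digons in $D$, hence forbids any symmetric arc, and hence forbids any directed cycle at all; $D$ is therefore acyclic, and the conclusion reduces to the standard fact that acyclic digraphs admit $k$-kernels for every $k \geq 2$. The essential case is thus $k = 3$, where the counting above is tightest.
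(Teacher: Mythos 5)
Your reduction to showing that every cycle of $\mathcal{C}^{k-1}(D)$ has a symmetric arc is exactly the paper's route (Lemma \ref{k-clo} plus Theorem \ref{Duchet}), but from there you diverge sharply. The paper only performs the double count when the paths $P_i$ are mutually internally disjoint, so that $W$ is itself a single cycle of $D$ (this is precisely its Lemma \ref{disjoint}); every overlapping configuration is then handled, for $k=3$ and $k=4$ only, by induction on $\|C\|$ and a long case analysis. You instead globalize the count by decomposing the traversal multiset of $W$ into simple cycles $B_1,\dots,B_r$ of $D$ and charging the hypothesis to each $B_j$ separately. The arithmetic is right: $S\le\sum_i(m_i-1)=\|W\|-n\le\frac{k-2}{k-1}\|W\|$ on the path side versus $S\ge\frac{k-2}{k-1}\|W\|+r$ on the cycle side forces $r\le 0$. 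If the decomposition step is legitimate, your argument proves the whole conjecture, not just $k=3,4$, so it must be scrutinized with corresponding care.

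The genuine gap is the termwise application of the hypothesis to the $B_j$ when some $B_j$ is a $2$-cycle, and $2$-cycles do occur in the decomposition (e.g.\ whenever $P_i$ and $P_{i+1}$ pass through a common internal vertex --- exactly the configurations that cost the paper the most effort). Read literally, the hypothesis demands more than $2$ symmetric arcs of a digon for every $k\ge 4$, which no digon can supply; that literal reading is what powers your $k=4$ ``shortcut,'' but it cannot be what the authors intend, since it would force $D$ to have no symmetric arcs at all, hence to be acyclic, rendering Theorem \ref{4k} vacuous --- and their sharpness example $H_k$ and Observation \ref{obs1} (``every cycle of length at most five is symmetric'') are built out of digons. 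Under the intended reading the hypothesis simply does not apply to a digon $B_j$, and your bound $\frac{k-2}{k-1}\|B_j\|+1$ is unjustified for it. The patch exists but you must supply it: both arcs of a digon are symmetric by definition, so such a $B_j$ contributes exactly $2=\frac{k-2}{k-1}\cdot 2+\frac{2}{k-1}$ symmetric traversals, still a strictly positive excess, and replacing ``$+r$'' by ``$+\frac{2r}{k-1}$'' in the cycle-side bound still contradicts $S\le\frac{k-2}{k-1}\|W\|$ once $r\ge 1$. (For $k\ge 5$ note also that the hypothesis outright forbids cycles of length $3\le\ell<k-1$, so no such $B_j$ arises.) Drop the $k=4$ shortcut, write out the balanced-multidigraph cycle decomposition explicitly, and treat short cycles as above; as far as I can verify, the argument then closes and yields strictly more than the paper proves, which is all the more reason to double-check it against the family $H_k$ (where your two bounds meet with equality and only the ``$+1$'' separates them).
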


Clearly, the case $k = 2$ is precisely the statement in Theorem \ref{Duchet}.   The main
contribution of this work is proving Conjecture \ref{Conj-k} true for $k = 3$ and $k = 4$.

The rest of the paper is organized as follows.   In Section \ref{sec:Prelim} some technical
results are proved, and a family of digraphs showing that, if true, Conjecture \ref{Conj-k}
is sharp, is constructed.   Conjecture \ref{Conj-k} is proved for $k = 3$ and $k = 4$ in
Sections \ref{duc3} and \ref{duc4}, respectively.

\section{Generalizing Duchet for $k$-kernels}
\label{sec:Prelim}

Our arguments rely heavily on Theorem \ref{Duchet} and the following result relating the existence of kernels in the $(k-1)$-closure of a digraph $D$ with the existence of $k$-kernels in $D$, \cite{Cesar2}.

\begin{lem} \label{k-clo}
Let $k \ge 2$ be an integer.   The digraph $D$ has a $k$-kernel if and only if its $(k-1)$-closure, $\mathcal{C}^{k-1} (D)$, has a kernel.
\end{lem}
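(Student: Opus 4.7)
The plan is to show that the definitions of $k$-kernel in $D$ and kernel in $\mathcal{C}^{k-1}(D)$ translate directly into one another via the very definition of the $(k-1)$-closure. The argument reduces to a careful chase of definitions and I would present it as two short implications.

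The pivotal observation I would record first is that, by definition of $\mathcal{C}^{k-1}(D)$, the arc $(u,v)$ belongs to $A(\mathcal{C}^{k-1}(D))$ if and only if $d_D(u,v) \le k-1$. Consequently, two distinct vertices $u,v$ are non-adjacent in $\mathcal{C}^{k-1}(D)$ (in both directions) precisely when $d_D(u,v) \ge k$ and $d_D(v,u) \ge k$, which is exactly the condition for $\{u,v\}$ to be $k$-independent in $D$.

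For the forward direction I would assume $S$ is a $k$-kernel of $D$ and verify that $S$ is a kernel of $\mathcal{C}^{k-1}(D)$. The $k$-independence of $S$ in $D$ yields via the observation above that $S$ spans no arc of $\mathcal{C}^{k-1}(D)$, so $S$ is independent in the closure. For absorbency, given $u \in V(D) \setminus S$, the $(k-1)$-absorbency of $S$ in $D$ produces $v \in S$ with $d_D(u,v) \le k-1$, and this is exactly an arc $(u,v)$ in $\mathcal{C}^{k-1}(D)$. The converse is entirely symmetric: if $S$ is a kernel of $\mathcal{C}^{k-1}(D)$, its independence there forces $d_D(u,v) \ge k$ for all $u,v \in S$ (so $S$ is $k$-independent in $D$), and its absorbency there provides, for each $u \notin S$, an arc $(u,v)$ in $A(\mathcal{C}^{k-1}(D))$, hence a $D$-path of length at most $k-1$ from $u$ to some $v \in S$.

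Since the proof is essentially a dictionary between the two settings, I do not anticipate a real obstacle. The only step deserving a moment of care is matching the symmetric phrasing of $k$-independence (requiring $d_D(u,v) \ge k$ and $d_D(v,u) \ge k$) with the absence of arcs between $u$ and $v$ in both directions in $\mathcal{C}^{k-1}(D)$; once this is flagged, both implications are immediate from the defining equivalence of arcs in the closure.
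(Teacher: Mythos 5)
Your argument is correct and complete: the paper does not prove Lemma \ref{k-clo} but imports it from \cite{Cesar2}, and the definitional dictionary you set up (arcs of $\mathcal{C}^{k-1}(D)$ correspond exactly to $D$-distance at most $k-1$, so independence in the closure is $k$-independence in $D$ and absorbency in the closure is $(k-1)$-absorbency in $D$) is precisely the standard proof of this equivalence. The one point needing care --- that $k$-independence is a symmetric condition matching the absence of arcs in both directions between vertices of $S$ in the closure --- is the point you flagged, so nothing is missing.
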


The basic idea of the proofs of Theorem \ref{3k} and Theorem \ref{4k}, concerning the generalization of Duchet's result for $3$ and $4$-kernels respectively, is to show that every cycle of the corresponding closure has a symmetric arc. In order to check that, we must exhaustively verify the ways in which a cycle in the closure can arise. 

We start by stating a result about the general case. Lemma \ref{disjoint} deals with a particular configuration of arcs that originates a cycle in the $(k-1)$-closure of a digraph $D$. Despite the fact that it only solves a very special configuration, it is valid for every integer $k \geq 2$.

\begin{lem}\label{disjoint}
Let $D$ be a digraph and $k$ a positive integer such that every cycle $C$ of $D$ has at least $\frac{k-2}{k-1} \left\| C \right\|  +1$ symmetric arcs and $B$ be a cycle of $ H = \mathcal{C}^{k-1} (D)$. If the paths of $D$ that give rise to the arcs of $A(C) \setminus A(D)$ are mutually internally disjoint, then $B$ has a symmetric arc.

\end{lem}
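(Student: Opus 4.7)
The plan is to realize the cycle $B$ of the closure as a genuine cycle of $D$ and then exploit the symmetric-arc hypothesis by a short counting argument. First I would write $B = (v_0, v_1, \ldots, v_m, v_0)$ and, for each $i$ (indices modulo $m+1$), use $(v_i, v_{i+1}) \in A(H)$ to fix a path $P_i$ in $D$ from $v_i$ to $v_{i+1}$ of length $\ell_i \leq k-1$; for arcs of $B$ already lying in $A(D)$ take $\ell_i = 1$. The hypothesis that these paths are mutually internally disjoint — read strongly enough that no interior vertex of any $P_j$ coincides with a vertex of $B$ or with the interior of another $P_i$ — ensures that the concatenation $C := P_0 \cup P_1 \cup \cdots \cup P_m$ is a directed cycle of $D$ of length $L = \sum_{i=0}^{m} \ell_i$, with $L \leq (m+1)(k-1)$.

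For the main step I would argue by contradiction: suppose $B$ has no symmetric arc, so that $(v_{i+1}, v_i) \notin A(H)$ for every $i$, equivalently $d_D(v_{i+1}, v_i) \geq k$. Then no $P_i$ can consist solely of symmetric arcs, since otherwise reversing the arcs of $P_i$ would give a walk of length $\ell_i \leq k-1$ from $v_{i+1}$ to $v_i$ in $D$. Thus every $P_i$ contributes at least one non-symmetric arc to $C$, and since the $P_i$ are arc-disjoint these contributions are pairwise distinct; hence $C$ has at least $m+1$ non-symmetric arcs and at most $L - (m+1)$ symmetric arcs.

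Applying the standing assumption on $D$, the cycle $C$ must contain at least $\frac{k-2}{k-1} L + 1$ symmetric arcs, and combining the two bounds yields
\[
\frac{k-2}{k-1}\, L + 1 \;\leq\; L - (m+1),
\]
which rearranges to $L \geq (k-1)(m+2)$. This directly contradicts $L \leq (k-1)(m+1)$, so some $P_i$ is entirely symmetric, and the reversed path witnesses that $(v_{i+1}, v_i) \in A(H)$, providing the required symmetric arc of $B$.

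The bulk of the argument is then a clean double count; the one point that requires care is setting up the configuration so that the union of the $P_i$ is a simple directed cycle of $D$, because only then does the hypothesis on symmetric arcs of cycles of $D$ apply. Thus the main obstacle is purely interpretive: verifying that ``mutually internally disjoint'' suffices to produce the cycle $C$ — once that is granted, the counting collapses the proof in a couple of lines.
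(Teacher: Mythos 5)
Your proposal is correct and follows essentially the same route as the paper: concatenate the paths into a single cycle of $D$ of length $L \le n(k-1)$, observe that a counterexample would force at least one non-symmetric arc per segment (hence at most $L-n$ symmetric arcs), and derive a contradiction with the hypothesis that the cycle carries at least $\frac{k-2}{k-1}L+1$ symmetric arcs. The paper runs the same count directly as the inequality $L-n\le\frac{k-2}{k-1}L$ rather than by contradiction, and, like you, it relies on the strong reading of ``mutually internally disjoint'' (which is what the authors verify before each application of the lemma).
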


\begin{proof}

Let $B = (v_1, v_2, \dots, v_n, v_1)$ be a cycle of $\mathcal{C}^{k-1} (D)$ that satisfies the hypothesis. For $1 \leq i \leq n-1$, let $T_i$ be the $v_iv_{i+1}$-path in $D$ that originates the arc $(v_i,v_{i+1})$ in $H$. Similarly, we will use $T_n$ to denote the $v_nv_1$-path in $D$ that gives rise to the arc $(v_n,v_1)$. Let $\mathcal{T} = \left\{T_i \st 1 \le i \le n\right\}$. Also, for every $j \in \left\{2,3,\dots, k-1\right\}$, $m_j$ will denote the number elements of length $j$ in $\mathcal{T}$.

It is clear that by joining the paths in  $\mathcal{T}$ with the arcs in $A(C) \cap A(D)$ in the natural way we get a cycle of length $$n - \left(\sum^{k-1}_{j = 2}m_j\right) + \sum^{k-1}_{j = 2} jm_j = n + \sum^{k-1}_{j = 2} (j-1)m_j.$$

Also, since the length of $C$ is $n$, we have that $$m_{k-1} \leq n, m_{k-1} + m_{k-2} \leq n, \dots, \sum^{k-1}_{j = 2} m_j \leq n,$$ so the addition of these inequalities yields $$ \sum^{k-1}_{j = 2} (j-1)m_j \leq (k-2)n.$$

By adding $\sum^{k-1}_{j = 2} (k-2)(j-1)m_j$ on both sides of the inequality, we have

\begin{center}
\begin{eqnarray*}
  \sum^{k-1}_{j = 2} (j-1)m_j + \sum^{k-1}_{j = 2} (k-2)(j-1)m_j &\leq& (k-2)n + \sum^{k-1}_{j = 2} (k-2)(j-1)m_j 
\end{eqnarray*}
\end{center}

Performing algebraic manipulations on both sides we get the following

\begin{center}
\begin{eqnarray*}
\sum^{k-1}_{j = 2} (k-1)(j-1)m_j &\leq& (k-2)\left[n + \sum^{k-1}_{j = 2} (j-1)m_j \right]\\
	(k-1)\sum^{k-1}_{j = 2} (j-1)m_j &\leq& (k-2)\left[n + \sum^{k-1}_{j = 2} (j-1)m_j \right]\\
		\sum^{k-1}_{j = 2} (j-1)m_j &\leq& \frac{(k-2)}{(k-1)} \left[n + \sum^{k-1}_{j = 2} (j-1)m_j \right]
\end{eqnarray*}
\end{center}

The last inequality and the hypothesis about the number of symmetric arcs in the cycles of $D$ imply that at least one arc of $B$ is symmetric.

\end{proof}

The following example shows that, if true, the bound proposed by Conjecture \ref{Conj-k} is sharp. Let $k$ be an integer, with $k \geq 3$ and $V_k$, $U_k$ and $W_k$ be disjoint sets with $k-1$ elements. We will use $v_i$, $u_i$ and $w_i$ to denote its elements, respectively, for $1 \leq i \leq k-1$. Let $E = \left\{e_v,f_v,e_u,f_u,e_w,f_w\right\}$ be a set disjoint of $V_k$, $U_k$ and $W_k$.

Let \index{Digraphs!$H_k$}$H_k$ be the digraph such that its vertex set is $V(H_k) = V_k \cup U_k \cup W_k \cup E$ and its arc set is formed by:

\begin{itemize}

\item The arcs $(v_i,v_{i+1})$, $(u_i,u_{i+1})$ and $(w_i,w_{i+1})$, for every $1 \leq i \leq k-2$.

The arcs $(v_i,v_{i-1})$, $(u_i,u_{i-1})$ and $(w_i,w_{i-1})$, for every $2 \leq i \leq k-1$.

\item The arcs $(v_{k-1},u_1)$, $(u_{k-1},w_1)$ and $(w_{k-1},v_1)$.

\item The arcs $(s_{k-1},e_s)$ and $(e_s,f_s)$, for every $s \in \left\{v,u,w\right\}$.

\end{itemize}

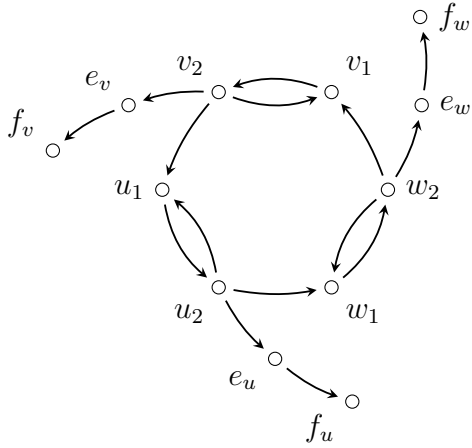
\begin{figure}[ht]
\begin{center}
\begin{tikzpicture}[scale=1]

\node (1) at (60:1.5)[circle, draw,inner sep=0pt, minimum width=5pt,label=60:$v_1$]{};
\node (2) at (120:1.5)[circle, draw,inner sep=0pt, minimum width=5pt,label=120:$v_2$]{};
\node (3) at (180:1.5)[circle, draw,inner sep=0pt, minimum width=5pt,label=180:$u_1$]{};
\node (4) at (240:1.5)[circle, draw,inner sep=0pt, minimum width=5pt,label=240:$u_2$]{};
\node (5) at (300:1.5)[circle, draw,inner sep=0pt, minimum width=5pt,label=300:$w_1$]{};
\node (6) at (0:1.5)[circle, draw,inner sep=0pt, minimum width=5pt,label=0:$w_2$]{};

\node (2a) at (150:2.25)[circle, draw,inner sep=0pt, minimum width=5pt,label=140:$e_v$]{};
\node (2b) at (170:3)[circle, draw,inner sep=0pt, minimum width=5pt,label=160:$f_v$]{};

\node (4a) at (270:2.25)[circle, draw,inner sep=0pt, minimum width=5pt,label=190:$e_u$]{};
\node (4b) at (290:3)[circle, draw,inner sep=0pt, minimum width=5pt,label=190:$f_u$]{};

\node (6a) at (30:2.25)[circle, draw,inner sep=0pt, minimum width=5pt,label=0:$e_w$]{};
\node (6b) at (50:3)[circle, draw,inner sep=0pt, minimum width=5pt,label=0:$f_w$]{};

\foreach \from/\to in {2/3,4/5,6/1,2/2a,2a/2b,4/4a,4a/4b,6/6a,6a/6b}
\draw [->, shorten <=3pt, shorten >=3pt, >=stealth, line width=.7pt] (\from) to [bend right = 10] (\to);

\foreach \from/\to in {1/2,2/1,3/4,4/3,5/6,6/5}
\draw [->, shorten <=3pt, shorten >=3pt, >=stealth, line width=.7pt] (\from) to [bend right = 20] (\to);

\end{tikzpicture}
\end{center}
\caption{The digraph $H_3$.}\label{H3}
\end{figure}

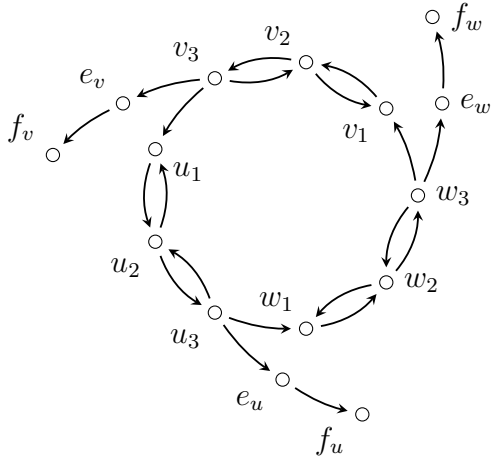
\begin{figure}[ht]
\begin{center}
\begin{tikzpicture}[scale=1]

\node (1) at (40:1.8)[circle, draw,inner sep=0pt, minimum width=5pt,label=220:$v_1$]{};
\node (2) at (80:1.8)[circle, draw,inner sep=0pt, minimum width=5pt,label=120:$v_2$]{};
\node (3) at (120:1.8)[circle, draw,inner sep=0pt, minimum width=5pt,label=120:$v_3$]{};

\node (4) at (160:1.8)[circle, draw,inner sep=0pt, minimum width=5pt,label=340:$u_1$]{};
\node (5) at (200:1.8)[circle, draw,inner sep=0pt, minimum width=5pt,label=240:$u_2$]{};
\node (6) at (240:1.8)[circle, draw,inner sep=0pt, minimum width=5pt,label=240:$u_3$]{};

\node (7) at (280:1.8)[circle, draw,inner sep=0pt, minimum width=5pt,label=100:$w_1$]{};
\node (8) at (320:1.8)[circle, draw,inner sep=0pt, minimum width=5pt,label=0:$w_2$]{};
\node (9) at (0:1.8)[circle, draw,inner sep=0pt, minimum width=5pt,label=0:$w_3$]{};

\node (3a) at (150:2.45)[circle, draw,inner sep=0pt, minimum width=5pt,label=140:$e_v$]{};
\node (3b) at (170:3.1)[circle, draw,inner sep=0pt, minimum width=5pt,label=160:$f_v$]{};

\node (6a) at (270:2.45)[circle, draw,inner sep=0pt, minimum width=5pt,label=190:$e_u$]{};
\node (6b) at (290:3.1)[circle, draw,inner sep=0pt, minimum width=5pt,label=190:$f_u$]{};

\node (9a) at (30:2.45)[circle, draw,inner sep=0pt, minimum width=5pt,label=0:$e_w$]{};
\node (9b) at (50:3.1)[circle, draw,inner sep=0pt, minimum width=5pt,label=0:$f_w$]{};

\foreach \from/\to in {3/4,6/7,9/1,3/3a,3a/3b,6/6a,6a/6b,9/9a,9a/9b}
\draw [->, shorten <=3pt, shorten >=3pt, >=stealth, line width=.7pt] (\from) to [bend right = 10] (\to);

\foreach \from/\to in {1/2,2/1,2/3,3/2,4/5,5/4,5/6,6/5,7/8,8/7,8/9,9/8}
\draw [->, shorten <=3pt, shorten >=3pt, >=stealth, line width=.7pt] (\from) to [bend right = 20] (\to);

\end{tikzpicture}
\end{center}
\caption{The digraph $H_4$.}\label{H4}
\end{figure}

Notice that the only cycle in $H_k$ of length greater than two is $C = (v_1,v_2, \dots, v_{k-1}, u_1, u_2, \dots,u_{k-1}, w_1, w_2, \dots, w_{k-1}, v_1)$ and has length $3(k-1)$ and has exactly $3(k-2) = \frac{k-2}{k-1} (3(k-1)) = \frac{k-2}{k-1} \left\| C \right\|$ symmetric arcs. Nevertheless, $H_k$ has no $k$-kernel.

\begin{prop}
The digraph $H_k$ has no $k$-kernel.
\end{prop}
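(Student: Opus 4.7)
The strategy is to identify a small set of forced membership and non-membership conditions on any hypothetical $k$-kernel $S$, and then to derive a contradiction from the $(k-1)$-absorbency requirement. Throughout I use that $S$ must be both $k$-independent and $(k-1)$-absorbent.

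First I would observe that $f_v$, $f_u$, and $f_w$ have out-degree zero, so no other vertex can absorb them; hence all three must lie in $S$. Distances from the ``arm'' vertices to $f_v$ are easy to read off: $d(e_v, f_v) = 1$ and $d(v_j, f_v) = (k-1-j) + 2 = k-j+1$ for $1 \le j \le k-1$. Since $f_v \in S$ and $k$-independence forces $d(\cdot, f_v) \ge k$, the only vertex of $V_k \cup \{e_v\}$ that can lie in $S$ is $v_1$, and analogously the only candidates in $U_k \cup \{e_u\}$ and $W_k \cup \{e_w\}$ are $u_1$ and $w_1$.

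Next I would verify that at most one of $v_1$, $u_1$, $w_1$ lies in $S$. Along the path $v_1 \to v_2 \to \cdots \to v_{k-1} \to u_1$ we have $d(v_1, u_1) = k-1 < k$, and by the rotational symmetry of $H_k$ the analogous inequalities $d(u_1, w_1) < k$ and $d(w_1, v_1) < k$ also hold. Hence $k$-independence forbids having two of them in $S$.

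A short case analysis then produces the contradiction. If none of $v_1, u_1, w_1$ lies in $S$, then $S \subseteq \{f_v, f_u, f_w\}$, and $v_1$ must be absorbed; but $d(v_1, f_v) = k$, $d(v_1, f_u) = 2k-1$, and $d(v_1, f_w) = 3k-2$, all exceeding $k-1$. If exactly one lies in $S$, say $v_1$ by symmetry, then $u_1$ must be absorbed by some vertex of $\{v_1, f_v, f_u, f_w\}$; but $d(u_1, v_1) = 2k-2$, $d(u_1, f_u) = k$, $d(u_1, f_w) = 2k-1$, and $d(u_1, f_v) = 3k-2$, again all exceeding $k-1$. In either case absorbency fails, so $H_k$ has no $k$-kernel. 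The main obstacle is purely bookkeeping: being careful with the asymmetric geometry of $H_k$, namely symmetric paths inside $V_k, U_k, W_k$ but one-way connectors between them and on the tails, so that every distance is computed in the correct direction.
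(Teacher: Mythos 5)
Your proof is correct and follows essentially the same route as the paper's: force the sinks $f_v,f_u,f_w$ into $S$, use $k$-independence with them to whittle the remaining candidates down to $v_1,u_1,w_1$, observe at most one of these can be chosen, and then show $(k-1)$-absorbency fails. Your version is in fact slightly more explicit than the paper's (the explicit distance list and the zero-vs-one case split), but the underlying argument is identical.
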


\begin{proof}

Suppose that $K$ is a $k$-kernel of $H_k$. Clearly, $\left\{f_v, f_u, f_w\right\} \subseteq K$, since they are the sinks of $H_k$. The sinks of $H_k$ clearly $(k-1)$-absorb every vertex in $V(H_k)$ except $v_1$, $u_1$ and $w_1$, so at least one of them must be included in $K$. Thanks to the symmetry of $H_k$, we can assume that $v_1 \in K$. Since $d(w_1,v_1) = k-1$, we have that $w_1$ is $(k-1)$-absorbed by $v_1$. However, $d(v_1,u_1) = k-1$, which means the vertex $u_1$ cannot be included in $K$, but neither is it $(k-1)$-absorbed by a vertex in $K$, contradicting the fact that $K$ is a $k$-kernel of $H_k$.

\end{proof}

This shows that a digraph $D$ such that every cycle of $D$ has at least $\frac{k-2}{k-1} \left\| C \right\|$ symmetric are does not necessarily have a $k$-kernel, which shows that we cannot drop the $+ 1$ in the hypothesis.

\section{$3$-kernels} \label{duc3}

Here we present a generalization of Duchet for $3$-kernels. The idea is to prove that the $2$-closure of a digraph whose cycles have a at least certain proportion of symmetric arcs satisfies the hypothesis of Theorem \ref{Duchet}. The important part is to notice that a cycle of the $2$-closure may not be a cycle of $D$, but the fact that there is a cycle in the $\mathcal{C}^2(D)$ gives us some information about the structure of $D$. 

We will now prove a generalization of Theorem \ref{Duchet} for $3$-kernels.

\begin{lem} \label{C_3}
Let $D$ be a digraph.   If every directed cycle $B$ in $D$ has at least $ \frac{1}{2} \left\| B \right\|  + 1$ symmetric arcs then, every $C_3$ in $H =\mathcal{C}^2 (D)$ has at least one symmetric arc.
\end{lem}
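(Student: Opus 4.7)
The plan is to argue by contradiction. Suppose $B = (v_1, v_2, v_3, v_1)$ is a triangle in $H = \mathcal{C}^2(D)$ with no symmetric arc, so $d_D(v_{i+1}, v_i) \geq 3$ for each $i$ (indices mod $3$). Each arc $(v_i, v_{i+1})$ of $B$ is witnessed in $D$ by a walk $T_i$ of length $1$ or $2$. First I would observe that the internal vertex of any length-$2$ witness $T_i$ cannot lie in $\{v_1, v_2, v_3\}$: otherwise, for instance if $T_1 = (v_1, v_3, v_2)$, then $(v_1, v_3) \in A(D) \subseteq A(H)$ would make the arc $(v_3, v_1)$ of $B$ symmetric. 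Hence every internal vertex $a_i$ may be taken in $V(D) \setminus \{v_1, v_2, v_3\}$.

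Let $s$ be the number of $T_i$'s of length $1$. I would case-split on $s \in \{3, 2, 1, 0\}$ and, in each case, assemble the $T_i$'s into a cycle of $D$ of length $6 - s$, then use the hypothesis (at least $\frac{\ell}{2}+1$ symmetric arcs on a cycle of length $\ell$) to drive a contradiction. For $s = 3$, the cycle is $B$ itself, which must be entirely symmetric, directly contradicting the assumption. For $s = 2$, the resulting $4$-cycle demands $\geq 3$ symmetric arcs, but each direct arc of $B$ being symmetric would itself produce a symmetric arc of $B$; and the two remaining arcs of the length-$2$ witness cannot both be symmetric either, since otherwise their reverses would furnish a length-$2$ walk realising the reverse of the third arc of $B$. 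For $s = 1$ and $s = 0$, the same pair argument applies: if both arcs $(v_i, a_i), (a_i, v_{i+1})$ of a length-$2$ witness were symmetric in $D$, then $v_{i+1} \to a_i \to v_i$ would give a symmetric arc of $B$, so each length-$2$ witness contributes at most one symmetric arc. Combined with the forced non-symmetry of any direct arc of $B$, this caps the symmetric count well below the $\geq 4$ demanded on the $5$-cycle (for $s = 1$) and the $6$-cycle (for $s = 0$).

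The main obstacle to this clean counting is that the concatenation of the $T_i$'s is a genuine cycle of $D$ only when the internal vertices are pairwise distinct. The coincident cases turn out to be the easiest, however: any equality $a_i = a_j$ furnishes enough arcs at the shared vertex $a$ to realise the reverse of an arc of $B$ by a walk of length at most $2$. For instance, if $a_1 = a_2 = a$, then $(v_1, a), (a, v_3) \in A(D)$, so $v_1 \to a \to v_3$ immediately places $(v_1, v_3)$ in $A(H)$ and makes $(v_3, v_1)$ symmetric in $B$; the coincidences $a_1 = a_3$ and $a_2 = a_3$ (and the triple coincidence $a_1 = a_2 = a_3$) are dispatched in exactly the same manner. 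With these shortcut cases handled first, the counting argument above completes the proof.
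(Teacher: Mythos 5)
Your proof is correct and follows essentially the same route as the paper's: split on how many arcs of the triangle are genuine arcs of $D$, dispose of coincidences among internal witness vertices by exhibiting a length-$\le 2$ reverse path directly, and apply the symmetric-arc density hypothesis to the assembled cycle of $D$ of length $6-s$, using the observation that a length-$2$ witness contributes at most one symmetric arc and a direct arc contributes none. The contrapositive framing and the up-front treatment of the degenerate intersections make your counting a bit more uniform than the paper's inline case analysis, but the substance is identical.
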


\begin{proof}

Let $C = (v_0, v_1, v_2,v_0)$ be a $3$-cycle of $H$. If every arc of $C$ is also an arc of $D$, then $C$ is symmetric. If only two of the arcs in $C$ are arcs of $D$, then we can assume without loss of generality that $(v_0,v_1), (v_1,v_2) \in A(D)$. Since $(v_2,v_0) \in A(H)$, there exists $w \in V(D)$ such that $(v_2,w)$ and $(w,v_0)$ are arcs of $D$. If $w = v_1$, we have that $(v_1,v_2)$ is a symmetric arc in $D$ and, therefore, in $H$. On the other hand, if $w \neq v_1$, then $(w,v_0,v_1,v_2,w)$ is a $4$-cycle in $D$, and from the main hypothesis we derive that $(v_0v_1)$, $(v_1,v_2)$ or both are symmetric in $H$.

Suppose now that only $(v_0,v_1)$ is an arc of $D$. Then there are vertices $v,w \in V(D)$ such that $(v_1,v), (v,v_2),(v_2,w)$ and $(w,v_0)$ are arcs of $D$. If $v = w$, then $(v_0,v_1,v,v_0)$ is a triangle of $D$ and the arc $(v_0,v_1)$ is symmetric in $H$. If $v \neq w$, then $(v_0,v_1,v,v_2,w,v_0)$ is a $5$-cycle in $D$, which has at least four symmetric arcs. If one of those is $(v_0,v_1)$, we are done. Otherwise, the pairs $(v_1,v), (v,v_2)$ and $(v_2,w),(w,v_0)$ are symmetric. The former case implies that $(v_1,v_2)$ is symmetric and the latter that $(v_2,v_0)$ is symmetric.

Finally, let us consider the case where none of the arcs in $C$ is an arc of $D$. Let $u,v,w$ be vertices of $D$ such that $(v_0,u),(u,v_1),(v_1,v), (v,v_2),(v_2,w)$ and $(w,v_0)$ are arcs of $D$. If $u = v_2$, it is easy to see that the both $(v_1,v_2)$ and $(v_2,v_0)$ are symmetric arcs in $H$. The cases $v = v_0$ and $w = v_1$ are analogous.

Assume that the vertices $u,v$ and $w$ are different from $v_0,v_1$ and $v_2$. If $u = v = w$, then every arc in $C$ is symmetric. If $u = v$ but $u \neq w$, then $(v_0,v,v_2,w,v_0)$ is a $4$-cycle and as such it has at least three symmetric arcs, implying that $(v_0,v)$, $(v,v_2)$ or both are symmetric arcs in $H$. The cases $ v \neq u = w$ and $v = w \neq u$ are analogous. If $u,v$ and $w$ are all different, then Lemma \ref{disjoint} guarantees the existence of a symmetric arc in $C$.

\end{proof}

\begin{thm}\label{3k}
Let $D$ be a digraph. If every directed cycle $B$ in $D$ has at least $ \frac{1}{2} \left\| B \right\|  + 1$ symmetric arcs, then every cycle in $H = \mathcal{C}^2(D)$ has a symmetric arc.
\end{thm}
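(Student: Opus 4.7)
The plan is a counting argument on a closed walk of $D$ derived from $B$. Given a cycle $B = (v_1, \dots, v_n, v_1)$ in $H$ (the case $n = 2$ is immediate, since the two arcs of a $2$-cycle are mutually reverse), I choose, for each arc $(v_i, v_{i+1})$ of $B$, a witness $P_i$ in $D$: the arc itself if $(v_i, v_{i+1}) \in A(D)$, and otherwise a $2$-path $(v_i, w_i, v_{i+1})$ in $D$, which must exist because $d_D(v_i, v_{i+1}) = 2$. Let $s$ be the number of witnesses of length $2$. Concatenating the witnesses along $B$ gives a closed walk $W$ in $D$ of length $n + s$; viewed as an Eulerian sub-multi-digraph of $D$, $W$ decomposes into arc-disjoint simple cycles $C_1, \dots, C_t$ of $D$ with $t \geq 1$.

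Applying the hypothesis to each $C_j$ yields at least $\|C_j\|/2 + 1$ symmetric arcs, which sum to at least $(n+s)/2 + t$ symmetric-arc occurrences in $W$. For the matching upper bound, I assume for contradiction that $B$ has no symmetric arc in $H$. Then no witness of length $1$ can be symmetric in $D$, since such an arc would lift to a symmetric arc of $B$ in $H$. Moreover, for each $2$-path witness $(v_i, w_i, v_{i+1})$, at most one of $(v_i, w_i)$ and $(w_i, v_{i+1})$ can be symmetric in $D$: otherwise $(v_{i+1}, w_i, v_i)$ would be a $2$-path in $D$, forcing $(v_i, v_{i+1})$ to be symmetric in $H$. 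Hence symmetric-arc occurrences in $W$ are at most $s$, and the inequality $s \geq (n+s)/2 + t$ simplifies to $s \geq n + 2t$, contradicting $s \leq n$ and $t \geq 1$.

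The step I expect to be the main obstacle is the bookkeeping when distinct $2$-path witnesses share a common arc of $D$. In such cases $W$ is a genuine closed walk with repeated arc-occurrences rather than a simple cycle, so Lemma~\ref{disjoint} does not apply directly. The Eulerian decomposition of $W$ into arc-disjoint simple cycles of $D$ is precisely the device that sidesteps this: every arc-occurrence of $W$ is assigned to exactly one $C_j$, the hypothesis applies to each $C_j$ individually, and the per-witness upper bound transfers unchanged to the total occurrence count, letting the argument close in a single inequality.
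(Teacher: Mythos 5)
Your proof is correct, and it takes a genuinely different --- and substantially shorter --- route than the paper's. The paper argues by strong induction on the length of the cycle in $H$: triangles are handled by a separate lemma (Lemma~\ref{C_3}), the configuration in which all witness paths are internally disjoint is handled by Lemma~\ref{disjoint}, and the inductive step is an extended case analysis on how two witness paths can intersect, each case producing a chord of the cycle in $H$ and hence a shorter cycle to which the induction hypothesis applies. You eliminate both the induction and the case analysis: the concatenated witnesses form a closed walk of length $n+s$ whose arc multiset has equal in- and out-degree at every vertex and therefore decomposes, by the standard cycle decomposition, into $t\ge 1$ arc-disjoint directed cycles of $D$; applying the hypothesis to each of these gives at least $\frac{n+s}{2}+t$ occurrences of symmetric arcs, while the assumption that $B$ has no symmetric arc in $H$ caps the count at one per length-two witness, i.e.\ at $s\le n$, forcing $t\le 0$. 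This is precisely the count underlying Lemma~\ref{disjoint} (your argument in the special case $t=1$), upgraded to arbitrary intersection patterns by the decomposition. The one point worth spelling out is that the decomposition may produce $2$-cycles of $D$, and one should not invoke the hypothesis for those without comment; but both arcs of a digon are symmetric by definition, so such a cycle meets the required bound $\frac12\cdot 2+1=2$ anyway and the summation stands. Your approach buys brevity and transparency --- it shows that the ``$+1$'' in the hypothesis is consumed exactly once per cycle of the decomposition, which matches the sharpness example $H_3$, where the walk is a single $6$-cycle with exactly $\frac62$ symmetric arcs --- and, unlike the paper's method, it does not obviously degrade as $k$ grows: the same bookkeeping (at most $j-1$ symmetric arcs per witness of length $j$, at least $\frac{k-2}{k-1}\|C_l\|+1$ per decomposition cycle, with short cycles treated separately) appears to close for the $(k-1)$-closure as well, so it would be worth checking whether it settles Conjecture~\ref{Conj-k} in general.
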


\begin{proof}

Let $C$ be a cycle in $H$. We proceed by induction on the length of $C$. The case when $C$ has length three is covered by Lemma \ref{C_3}.

Suppose then that $C = (v_1,v_2, \dots, v_n,v_0)$ is an $n$-cycle in $H$. For every arc $(v_i, v_{i+1}) \in A(C) \setminus A(D)$ there is a vertex $v_{i(i+1)} \in V(D)$ such that $(v_i, v_{ij}, v_j)$ is a directed path in $D$.   If $v_{i(i+1)} \neq v_{j(j+1)}$ for every $i \neq j$, and $v_{i(i+1)} \neq v_j$ for every $1 \leq i, j \leq n$, then Lemma \ref{disjoint} gives us the existence of a symmetric arc of $C$.

Thus, we can assume that $v_{i(i+1)} = v_j$ for some $1 \leq i,j \leq n$ or that $v_{i(i+1)} = v_{j(j+1)}$ for some $i \neq j$. If $v_{i(i+1)} = v_{(i+1)(i+2)}$ for some $1 \le i \le n$, then $(v_i, v_{i(i+1)}, v_{i+2})$ is a path of length two in $D$, implying that $(v_i,v_{i+2}) \in A(H)$.   We can assume without loss of generality that $i = 1$, hence $C' = (v_1, v_3) \cup v_3 C v_1$ is a cycle of length $n-1$ in $H$, just as depicted in Figure \ref{Cf1}.   The induction hypothesis implies that $C'$ has at least one symmetric arc.   If $C'$ has a symmetric arc other than $(v_1, v_3)$, then $C$ has a symmetric arc.   Thus, we can assume that $(v_1, v_3)$ is a symmetric arc in $H$. We have two possibilities. If $(v_3, v_1) \in A(D)$, then $(v_1,v_{12},v_3,v_1)$ is a triangle in $D$. The main hypothesis guarantees that $(v_1,v_{12})$ or $(v_{12},v_3)$ is symmetric, implying that $(v_1,v_2)$ or $(v_2,v_3)$ is symmetric.  If $(v_3, v_1) \notin A(D)$, there is a vertex $w \in V(D)$ such that $(v_3,w,v_1)$ is a path in $D$. If $w = v_2$ or $w = v_{12}$, then $(v_1,v_2)$ and $(v_2,v_3)$ are symmetric. Otherwise, $(v_1,v_{12},v_3,w,v_1)$ is a $4$-cycle in $D$ and has at least three symmetric arcs. Hence, $(v_1,v_{12})$ or $(v_{12},v_3)$ is symmetric and, therefore, $(v_1,v_2)$ or $(v_2,v_3)$ is a symmetric arc of $C$.

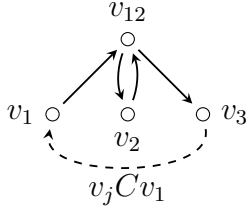
\begin{figure}[ht]
\begin{center}
\begin{tikzpicture}


\node (1) at (-1,0)[circle, draw,inner sep=0pt, minimum width=5pt,label=180:$v_1$]{};
\node (2) at (0,0)[circle, draw,inner sep=0pt, minimum width=5pt,label=270:$v_2$]{};
\node (3) at (1,0)[circle, draw,inner sep=0pt, minimum width=5pt,label=0:$v_3$]{};
\node (12) at (0,1)[circle, draw,inner sep=0pt, minimum width=5pt,label=90:$v_{12}$]{};
\node (C) at (0,-1)[]{$v_jCv_1$};

\foreach \from/\to in {1/12,12/3}
\draw [->, shorten <=3pt, shorten >=3pt, >=stealth, line width=.7pt] (\from) to [bend right = 0] (\to);

\foreach \from/\to in {2/12,12/2}
\draw [->, shorten <=3pt, shorten >=3pt, >=stealth, line width=.7pt] (\from) to [bend right = 20] (\to);

\foreach \from/\to in {3/1}
\draw [->, dashed, shorten <=3pt, shorten >=3pt, >=stealth, line width=.7pt] (\from) to [bend left = 100] (\to);

\end{tikzpicture}
\end{center}
\caption{The case $v_{12} = v_{23}$.}\label{Cf1}
\end{figure}

If $v_{i(i+1)} = v_j$ for some $1 \le i,j \le n$, then we can suppose that $j \notin \{ i-1, i+2 \}$, otherwise $(v_{i-1}, v_{i})$ or $(v_{i+1}, v_{i+2})$ would be a symmetric arc of $C$. Now, let $1 \le i \ne j \le n$ be such that $v_{i(i+1)} = v_{j(j+1)}$ or $v_{i(i+1)} = v_j$ and $|j-i|$ is minimum with this property.   We can assume without loss of generality that $i = 1$.

If $v_{12} = v_{j(j+1)}$ (see Figure \ref{Cf2}), then we have already observed that $j = 2$ implies the existence of a symmetric arc in $H$, so $j \ge 3$.   Let $P$ be the walk obtained from $v_2 C v_j$ by replacing every arc $(v_i, v_{i(i+1)}) \in A(C) \setminus A(D)$ with the path $(v_i, v_{i(i+1)}, v_{i+1})$ of $D$. Since we chose $|j-i|$ minimum, $P$ is a path in $D$ and $C' = P \cup (v_j, v_{12}, v_2)$ is a cycle in $D$ of length $k+j$, where $k = |A(v_2 C v_j) \setminus A(D)|$.  From the main hypothesis we derive that $C'$ has at least $ \frac{k+j}{2}  + 1$ symmetric arcs in $D$.  If there  is an arc $(v_i, v_{i+1}) \in A(P) \cap A(C)$ that is symmetric in $D$, then we have found a symmetric arc of $C$.   Otherwise, we have $ \frac{k+j}{2}  + 1$ symmetric arcs in the remaining $k+1$ pairs of arcs of $C'$.   But, $k \le j-2$ and hence $k \le  \frac{k+j-2}{2} $.   We obtain that $k+1 \le  \frac{k+j}{2} $.   Hence, the Pidgeonhole Principle implies that either a pair of arcs $(v_i, v_{i(i+1)})$ and $(v_{i(i+1)}, v_{(i+1)})$ of $C'$ are symmetric in $D$ or the arcs $(v_j, v_{12})$ and $(v_{12}, v_2)$ are symmetric in $D$.   In the former case, the arc $(v_i, v_{i+1})$ is a symmetric arc of $C$.

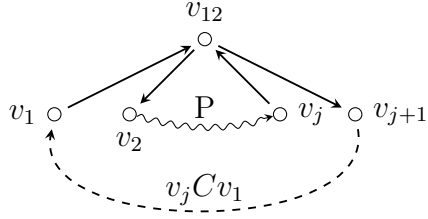
\begin{figure}[ht]
\begin{center}
\begin{tikzpicture}


\node (1) at (-1,0)[circle, draw,inner sep=0pt, minimum width=5pt,label=180:$v_1$]{};
\node (2) at (0,0)[circle, draw,inner sep=0pt, minimum width=5pt,label=270:$v_2$]{};
\node (3) at (2,0)[circle, draw,inner sep=0pt, minimum width=5pt,label=0:$v_{j}$]{};
\node (4) at (3,0)[circle, draw,inner sep=0pt, minimum width=5pt,label=0:$v_{j+1}$]{};

\node (12) at (1,1)[circle, draw,inner sep=0pt, minimum width=5pt,label=90:$v_{12}$]{};
\node (C) at (1,-1)[]{$v_jCv_1$};
\node (P) at (1,.1)[]{P};

\foreach \from/\to in {1/12,12/2,3/12,12/4}
\draw [->, shorten <=3pt, shorten >=3pt, >=stealth, line width=.7pt] (\from) to [bend right = 0] (\to);

\foreach \from/\to in {2/3}
\draw [->,decorate, decoration={snake,amplitude=.4mm,segment length=2mm,post length=1mm}, shorten <=0pt, shorten >=0pt, >=stealth, line width=.4pt] (\from) to [bend right = 15] (\to);

\foreach \from/\to in {4/1}
\draw [->,dashed, shorten <=3pt, shorten >=3pt, >=stealth, line width=.7pt] (\from) to [bend left = 100] (\to);

\end{tikzpicture}
\end{center}
\caption{The case $v_{12} = v_{j(j+1)}$.}\label{Cf2}
\end{figure}

In the latter case, let us observe that $(v_1, v_{12}, v_{j+1})$ is a path in $D$, and hence $C'' = (v_1, v_{j+1}) \cup v_{j+1} C v_1$ is a directed cycle in $H$ of length less than $n$.   Thus, we can derive from the induction hypothesis that  $C''$ has at least one symmetric arc.   If such symmetric arc is not $(v_1, v_{j+1})$, then we have already found a symmetric arc of $C$.   So, $(v_{j+1}, v_1) \in A(H)$, and we have two cases.   If $(v_{j+1}, v_1) \in A(D)$, then $(v_1, v_{12}, v_{j+1}, v_1)$ is a cycle of $D$.   Hence, the arcs $(v_1, v_{12})$ and $(v_{12}, v_{j+1})$ are symmetric in $D$.   We can conclude that $(v_2, v_1) \in A(H)$ and $(v_{j+1}, v_j) \in A(H)$.   Thus, we may assume that there is a vertex $x \in V(D)$ such that $(v_{j+1}, x, v_1)$ is a path in $D$.   If $x = v_{12}$, then $(v_2, v_{12}, v_1)$ is a path in $D$, and $(v_2, v_1)$ is a symmetric arc of $H$.   If $x \ne v_{12}$, then $(v_1, v_{12}, v_{j+1}, x, v_1)$ is a cycle in $D$.   Again, at least one of the arcs $(v_1, v_{12})$ or $(v_{12}, v_{j+1})$ is symmetric in $D$.   This implies that $(v_2, v_1) \in A(H)$ or $(v_{j+1}, v_j) \in A(H)$, as desired.

If $v_{12} = v_j$ (see Figure \ref{Cf3}), then we have already observed that $j \in \{ n, 3 \}$ implies the existence of a symmetric arc of $C$.   Thus, since $D$ is loopless, we can consider $j \notin \{ 1, 2, 3, n \}$.   By an argument similar to the previous case, we obtain the path $P$ replacing every arc $(v_i, v_{i(i+1)}) \in V(C) \setminus V(D)$ with the path $(v_i, v_{i(i+1)}, v_{i+1})$ in $v_2 C v_j$.   And again, we construct the cycle $C' = P \cup (v_j, v_2)$ in $D$ of length $k+j-1$, where $k = |A(v_2 C v_j) \setminus A(D)|$.   Let us observe that $k \le j-2$, thus $k \le \frac{k+j-2}{2}$ and $k+1 \le \frac{k+j}{2}$. It follows from the main hypothesis that there are at least $\frac{k+j-1}{2} + 1$ symmetric arcs in $C'$.   Let us observe that, if the arc $(v_j, v_2)$ is symmetric in $D$, then there are at least $\frac{k+j-1}{2}$ symmetric arcs in $P$.   But this implies that either there is an arc of $A(v_2 C v_j) \cap A(D)$ that is symmetric in $D$, or that there exists $i$, with $2 \le i \le j-1$, such that the arcs $(v_i, v_{i(i+1)})$ and $(v_{i(i+1)}, v_{(i+1)})$ are symmetric in $D$.   In any case, $C$ has at least one symmetric arc.

\begin{figure}[ht]
\begin{center}
\begin{tikzpicture}


\node (1) at (-1,0)[circle, draw,inner sep=0pt, minimum width=5pt,label=180:$v_1$]{};
\node (2) at (0,0)[circle, draw,inner sep=0pt, minimum width=5pt,label=270:$v_2$]{};
\node (3) at (2,0)[circle, draw,inner sep=0pt, minimum width=5pt,label=0:$v_{j}$]{};


\node (P) at (1,-.5)[]{P};

\foreach \from/\to in {1/3}
\draw [->, shorten <=3pt, shorten >=3pt, >=stealth, line width=.7pt] (\from) to [bend left = 60] (\to);

\foreach \from/\to in {3/2}
\draw [->, shorten <=3pt, shorten >=3pt, >=stealth, line width=.7pt] (\from) to [bend right = 30] (\to);

\foreach \from/\to in {2/3}
\draw [->,decorate, decoration={snake,amplitude=.4mm,segment length=2mm,post length=1mm}, shorten <=0pt, shorten >=0pt, >=stealth, line width=.4pt] (\from) to [bend right = 15] (\to);

\end{tikzpicture}
\end{center}
\caption{The case $v_{12} = v_{j}$.}\label{Cf3}
\end{figure}
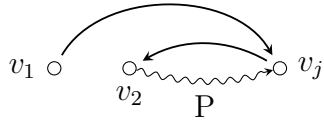

Since in any case the cycle $C$ has a symmetric arc, the result follows from the Principle of Mathematical Induction.

\end{proof}

Since every cycle of $\mathcal{C}^2(D)$ has a symmetric arc, we have that $D$ is kernel perfect due to Theorem \ref{Duchet}. By applying Lemma \ref{k-clo} to the digraph $D$ we get that $D$ has a $3$-kernel. This is stated in the following corollary
.
\begin{cor}
Let $D$ be a digraph. If every directed cycle $B$ in $D$ has at least $ \frac{1}{2} \left\| B \right\|  + 1$ symmetric arcs, then $D$ has a $3$-kernel.
\end{cor}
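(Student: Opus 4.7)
The plan is to combine Theorem \ref{3k} with Duchet's theorem (Theorem \ref{Duchet}) and Lemma \ref{k-clo}, since each of these ingredients is now available. First, I would apply Theorem \ref{3k} directly to $D$: the hypothesis of the corollary is exactly the hypothesis of that theorem, so we conclude that every directed cycle of $H = \mathcal{C}^{2}(D)$ contains a symmetric arc.

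Next, I would invoke Duchet's theorem on $H$. Since every directed cycle of $H$ has a symmetric arc, Theorem \ref{Duchet} guarantees that $H$ is kernel-perfect; in particular, $H$ itself has a kernel.

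Finally, I would apply Lemma \ref{k-clo} with $k = 3$: the digraph $D$ has a $3$-kernel if and only if its $2$-closure $\mathcal{C}^{2}(D) = H$ has a kernel. Combining this equivalence with the previous step yields that $D$ has a $3$-kernel, which is the desired conclusion.

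There is no real obstacle at this stage, as the substantive work is encapsulated in Theorem \ref{3k}; the corollary is just the formal assembly of that theorem with the known results linking closures, kernels, and Duchet's sufficient condition. The only point worth double-checking is that Lemma \ref{k-clo} is used with the correct indexing, namely that the $(k-1)$-closure for $k=3$ is indeed $\mathcal{C}^{2}(D)$, which matches the digraph $H$ produced by Theorem \ref{3k}.
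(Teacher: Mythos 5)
Your proof is correct and follows exactly the route the paper takes: Theorem \ref{3k} gives that every cycle of $\mathcal{C}^2(D)$ has a symmetric arc, Theorem \ref{Duchet} then yields a kernel of $\mathcal{C}^2(D)$, and Lemma \ref{k-clo} with $k=3$ converts that kernel into a $3$-kernel of $D$. Your indexing check is also right, so there is nothing to add.
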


\section{$4$-kernels} \label{duc4}

Now, we will now prove a similar result for $4$-kernels. We need a few previous technical lemmas to do so.

\begin{obs}\label{obs1}

Let $D$ be a digraph. If every directed cycle $C$ in $D$ has at least $ \frac{2}{3} \left\| C \right\|  + 1$ symmetric arcs, then every cycle of length at most five is symmetric. Also, every cycle of length greater than five than has at least five symmetric arcs.

\end{obs}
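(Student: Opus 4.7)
The plan is to unpack the hypothesis arithmetically: since the number of symmetric arcs in a cycle is an integer, the bound $\frac{2}{3}\|C\| + 1$ strengthens to $\lceil \frac{2}{3}\|C\| + 1 \rceil$, and both assertions reduce to computing this ceiling and comparing it with $\|C\|$ or with $5$.

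For the first assertion I would handle the small lengths individually. A $2$-cycle is symmetric by definition. For $\|C\| = 3$ the bound is exactly $3$; for $\|C\| = 4$ the bound $11/3$ rounds up to $4$; for $\|C\| = 5$ the bound $13/3$ rounds up to $5$. In every case the integer lower bound on the number of symmetric arcs already equals the length of the cycle, forcing every arc to be symmetric. For the second assertion I would just observe that $\frac{2}{3} n + 1$ is nondecreasing in $n$ and equals $5$ at $n = 6$, so any cycle of length greater than $5$ has at least $5$ symmetric arcs.

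The only subtlety — and it is genuinely minor — is using the integrality of the count of symmetric arcs to round up $\frac{2}{3}\|C\| + 1$ when it is not itself an integer. There is no real obstacle here; the observation is essentially a direct restatement of the main hypothesis at small lengths.
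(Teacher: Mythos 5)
Your proposal is correct and is exactly the intended justification: the paper states this as an Observation without proof, and the content is precisely the integrality/ceiling computation you carry out ($\lceil 11/3\rceil = 4$ for length $4$, $\lceil 13/3\rceil = 5$ for length $5$, and $\frac{2}{3}n+1 \ge 5$ for $n \ge 6$). Nothing further is needed.
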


\begin{lem} \label{lem1}
Let $D$ be a digraph such that every directed cycle $C$ in $D$ has at least $ \frac{2}{3} \left\| C \right\|  + 1$ symmetric arcs and $u,v \in V(D)$. If $P$ is a directed $uv$-path, $Q$ is a directed $vu$-path and $\left\|P\right\| + \left\|Q\right\| \leq 5$, then every arc in $A(P) \cup A(Q)$ is symmetric.
\end{lem}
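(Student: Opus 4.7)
My plan is to view the concatenation of $P$ and $Q$ as a single closed walk and then to extract, through each of its arcs, a short directed cycle of $D$ so that Observation \ref{obs1} can be invoked arc by arc. Concretely, let $W = P \cdot Q$ be the closed walk obtained by following $P$ from $u$ to $v$ and then $Q$ from $v$ back to $u$; its length is $\|P\|+\|Q\| \le 5$, and $A(P) \cup A(Q) = A(W)$.

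The key step will be to show that every arc of $W$ lies on a directed cycle of $D$ of length at most $5$. Given $(a,b) \in A(W)$, I rotate $W$ so that $(a,b)$ is its initial arc; deleting $(a,b)$ then leaves a $b$-to-$a$ walk in $D$ of length at most $4$, from which I extract a $b$-to-$a$ path $R$ by pruning any closed sub-walks. Concatenating $R$ with the arc $(a,b)$ produces a directed cycle $C$ through $(a,b)$ of length at most $5$ in $D$. Observation \ref{obs1} now handles each arc: if $\|C\| \in \{3,4,5\}$, the inequality $\frac{2}{3}\|C\| + 1$ forces every arc of $C$ to be symmetric, and if $\|C\| = 2$ the arc $(a,b)$ is symmetric directly from the definition of a $2$-cycle. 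In either case, the fixed arc $(a,b)$ is symmetric.

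The step that I expect to require the most care is the extraction of the path $R$ from the walk left after deleting $(a,b)$. Because $P$ and $Q$ may share internal vertices, and in principle could even share arcs, the remaining walk need not itself be a path, so I must appeal to the standard fact that any $b$-to-$a$ walk contains a $b$-to-$a$ path of no greater length and verify that this reduction still yields a simple cycle once the arc $(a,b)$ is glued back. Once that routine reduction is in hand, the conclusion that every arc of $A(P) \cup A(Q)$ is symmetric follows since $(a,b)$ was arbitrary, and the whole argument becomes essentially a direct application of Observation \ref{obs1}.
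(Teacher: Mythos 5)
Your argument is correct, and it reaches the conclusion by a genuinely more uniform route than the paper. The paper's proof enumerates the possibilities for $(\left\|P\right\|,\left\|Q\right\|)$ and, within each, the ways the internal vertex sets $S_P$ and $S_Q$ can intersect, exhibiting in each case either a reversed arc or an explicit short cycle to which Observation \ref{obs1} applies. You replace that entire case analysis with one observation: every arc of the closed walk $P\cdot Q$ (of length at most $5$) lies on a directed cycle of $D$ of length at most $5$, obtained by rotating the walk, deleting the arc, and pruning the residual walk to a path before gluing the arc back. The pruning step you flag as delicate is indeed the only point needing care, and it goes through: the extracted $b$-to-$a$ path $R$ is simple with $a\neq b$ (as $D$ is loopless), it cannot contain the arc $(a,b)$ since $a$ is its terminal vertex, so $R\cup(a,b)$ is a genuine cycle of length between $2$ and $5$; length $2$ gives symmetry of $(a,b)$ outright and lengths $3$ through $5$ are handled by Observation \ref{obs1}. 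Both proofs thus rest on the same engine (short cycles are fully symmetric), but yours buys uniformity and would extend verbatim to any closed walk of length at most $5$, while the paper's explicit enumeration has the minor cost of leaving a couple of subcases (e.g.\ $z\notin\{x,y\}$ when $\left\|P\right\|=3$, $\left\|Q\right\|=2$) implicit.
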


\begin{proof}
 Clearly, if $\left\|P\right\| = 1$ or $\left\|Q\right\| = 1$, the result follows from Observation \ref{obs1}. Let $S_P = V(P) \setminus \left\{u,v\right\}$ and $S_Q = V(Q) \setminus \left\{u,v\right\}$. Suppose that $\left\|P\right\| = 2 =\left\|Q\right\|$. If $S_P \cap S_Q = \varnothing$, we have the desired result by Observation \ref{obs1}. If $S_P \cap S_Q \neq \varnothing$, then $Q$ is the path obtained by reversing the arcs of $P$, which means every arc in $A(P) \cup A(Q)$ is symmetric. 

Finally, assume without loss of generality that $\left\|P\right\| = 3$ and  $\left\|Q\right\| = 2$. Take $P = (u,x,y,v)$ and $Q = (v,z,u)$. If $z = x$, then $(u,x)$ is symmetric and $(x,y,v,x)$ is a $3$-cycle, which is symmetric by Observation \ref{obs1}. A similar argument works when $z = y$. In any case, every arc in $A(P) \cup A(Q)$ is symmetric.

\end{proof}

\begin{lem} \label{lem2}
Let $D$ be a digraph such that every directed cycle $C$ in $D$ has at least $ \frac{2}{3} \left\| C \right\|  + 1$ symmetric arcs and $u,v \in V(D)$. If $P$ is a directed $uv$-path, $Q$ is a directed $vu$-path and $\left\|P\right\| + \left\|Q\right\| \leq 6$, then every arc in $A(P) \cup A(Q)$ is symmetric but at most one.
\end{lem}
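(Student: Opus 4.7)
My plan is to reduce Lemma~\ref{lem2} to Lemma~\ref{lem1} by a case split on whether $P$ and $Q$ share an internal vertex.

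In the first case, where $V(P)\cap V(Q)=\{u,v\}$, the concatenation $C := P\cup Q$ is a directed cycle of $D$ with $\left\|C\right\|=\left\|P\right\|+\left\|Q\right\|\le 6$. If $\left\|P\right\|+\left\|Q\right\|\le 5$, Lemma~\ref{lem1} already yields that every arc of $A(P)\cup A(Q)$ is symmetric, which is stronger than the claim. Otherwise $\left\|P\right\|+\left\|Q\right\|=6$, and the main hypothesis on $D$ furnishes $C$ with at least $\tfrac{2}{3}\cdot 6 + 1 = 5$ symmetric arcs, so at most one arc of $A(P)\cup A(Q)=A(C)$ is non-symmetric.

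In the second case, there is a shared internal vertex $w=p_i=q_j$ with $1\le i\le \left\|P\right\|-1$ and $1\le j\le \left\|Q\right\|-1$. I would set $P_1=uPw$, $P_2=wPv$, $Q_1=vQw$ and $Q_2=wQu$. Then $(P_1,Q_2)$ is a $uw$-path paired with a $wu$-path of combined length $i+(\left\|Q\right\|-j)$, while $(P_2,Q_1)$ is a $wv$-path paired with a $vw$-path of combined length $(\left\|P\right\|-i)+j$. These two sums add to $\left\|P\right\|+\left\|Q\right\|\le 6$ and each is at least $2$, so each is at most $4$, and in particular at most $5$. Applying Lemma~\ref{lem1} to each of the two pairs forces every arc of $A(P_1)\cup A(Q_2)\cup A(P_2)\cup A(Q_1)=A(P)\cup A(Q)$ to be symmetric---a conclusion strictly stronger than required.

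I do not expect a serious obstacle. The only things to check carefully are that $P_1,P_2,Q_1,Q_2$ are genuine directed paths (immediate, since each is a sub-path of a directed path) and that in the split case both length-sums fall in the range $[2,5]$ where Lemma~\ref{lem1} is applicable. Everything else is a clean reduction to the main hypothesis or to the preceding lemma.
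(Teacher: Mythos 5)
Your argument is correct, and in the case where $P$ and $Q$ meet internally it takes a genuinely different, more uniform route than the paper. On the disjoint side you and the paper coincide: combined length at most five reduces to Lemma \ref{lem1}, and in the internally disjoint length-six case the concatenation is a $C_6$ which the main hypothesis endows with at least $\frac{2}{3}\cdot 6+1=5$ symmetric arcs. But when $P$ and $Q$ share an internal vertex, the paper proceeds by exhaustive enumeration: it fixes $(\left\|P\right\|,\left\|Q\right\|)\in\{(2,4),(3,3)\}$ and checks, one by one, each way the internal vertices can be identified (for $P=(u,z,w,v)$, $Q=(v,x,y,u)$ the sub-cases $z=x,w=y$; $z=y,w=x$; $z=y,w\neq x$; etc.), in each sub-case exhibiting a short cycle by hand. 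Your splitting of both paths at one common internal vertex $w$, producing a $uw$-/$wu$-pair and a $wv$-/$vw$-pair whose combined lengths are each at least $2$ and sum to at most $6$ --- hence each at most $4$, safely within the reach of Lemma \ref{lem1} --- collapses all of those sub-cases into a single two-line application of the previous lemma, and it arrives at the same (stronger) conclusion the paper reaches in every intersecting sub-case, namely that \emph{every} arc of $A(P)\cup A(Q)$ is symmetric. The only points requiring care, which you correctly flag, are that the four segments are genuine directed paths (they are, being subpaths of paths) and that Lemma \ref{lem1} tolerates its two paths sharing further internal vertices (it does; its statement imposes no disjointness hypothesis). Your version is shorter, less error-prone, and makes clear that the one permitted non-symmetric arc can only occur in the internally disjoint configuration.
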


\begin{proof}

The cases where $\left\|P\right\| + \left\|Q\right\| \leq 5$ are covered by Lemma \ref{lem2}. Suppose that $\left\|P\right\| + \left\|Q\right\| = 6$. If $\left\|P\right\| = 1$ or $\left\|Q\right\| = 1$, then $PQ$ is a $C_6$ and has at least five symmetric arcs.  Take $S_P = V(P) \setminus \left\{u,v\right\}$ and $S_Q = V(Q) \setminus \left\{u,v\right\}$. If $S_P \cap S_Q = \varnothing$, we have that $uPvQu$ is a $C_6$ the result follows directly. We can thus assume that $S_P \cap S_Q \neq \varnothing$. If $\left\|P\right\| = 2$ and $\left\|Q\right\| = 4$, take $P = (u,w,v)$ and $Q = (v,x,y,z,u)$. If $w = x$, we have that the arc $(v,x)$ is symmetric and that $(u,x,y,z,u)$ is a $C_5$, so every arc in $A(P) \cup A(Q)$ is symmetric. If $w = z$, a similar argument yields the same result. If $w = y$, then $(u,y,z,u)$ and  $(v,x,y,v)$ are directed triangles, so they are symmetric and the results follows.

Finally, suppose that $\left\|P\right\| = 3 = \left\|Q\right\|$, $P = (u,z,w,v)$ and $Q = (v,x,y,u)$. If $z = y$ and $w = x$, then $Q$ is the path obtained by reversing the arrows of $P$, which means every arc in $A(P) \cup A(Q)$ is symmetric. If $z = x$ and $w = y$, then $(u,x,y,u)$ and $v,x,y,v$ are directed triangles, which are symmetric in $D$. If $z = y$ and $w \neq x$, then $(u,y)$ is symmetric and $(v,x,y,w,v)$ is a $C_4$, hence every arc in $A(P) \cup A(Q)$ is symmetric. Similar arguments solve the remaining cases.

\end{proof}

\begin{lem} \label{trian}
Let $D$ be a digraph. If every directed cycle $C$ in $D$ has at least $ \frac{2}{3} \left\| C \right\|  + 1$ symmetric arcs then, every $C_3$ in $H =\mathcal{C}^3 (D)$ has at least one symmetric arc.
\end{lem}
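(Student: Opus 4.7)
Let $C = (v_0, v_1, v_2, v_0)$ be a triangle in $H = \mathcal{C}^3(D)$, and for each $i \in \{0, 1, 2\}$ (indices mod $3$) let $T_i$ be a shortest $v_i v_{i+1}$-path in $D$, of length $\ell_i \in \{1, 2, 3\}$. It suffices to exhibit some $T_i$ all of whose arcs are symmetric in $D$: reversing $T_i$ then yields a $v_{i+1} v_i$-path in $D$ of length at most $3$, placing $(v_{i+1}, v_i)$ in $A(H)$ and making the corresponding arc of $C$ symmetric. The proof is by case analysis on how the paths $T_0, T_1, T_2$ interact inside $D$.

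Case A (generic case): The paths $T_0, T_1, T_2$ are mutually internally vertex-disjoint and their internal vertices avoid $V(C)$. Then $W = T_0 T_1 T_2$ is a directed cycle in $D$ with $\|W\| \leq 9$, so the main hypothesis guarantees at least $\frac{2}{3}\|W\| + 1$ symmetric arcs, i.e., at most $\frac{\|W\|}{3} - 1 \leq 2$ non-symmetric arcs in $W$. Distributing these (at most) two bad arcs among the three sub-paths $T_0, T_1, T_2$ by the Pigeonhole Principle forces one $T_i$ to be fully symmetric, and we are done.

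Case B (vertex of $C$ inside a path): Some $v_k$ with $k \notin \{i, i+1\}$ is an internal vertex of some $T_i$. Splitting $T_i$ at $v_k$ produces two sub-paths of positive integer lengths summing to $\ell_i \leq 3$, so one of them has length $1$. That sub-path is a single arc of $D$ between two of the $v_j$'s, and an inspection of the possibilities $(a,b)\in\{(1,1),(1,2),(2,1)\}$ shows this arc is always the reverse of some arc of $C$, providing a symmetric arc of $C$ in $H$.

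Case C (two paths share an internal vertex): Up to relabeling (the arcs of $C$ can be cyclically rotated), say $T_0$ and $T_1$ share an internal vertex $w \notin V(C)$. Split $T_0 = P\cdot Q$ and $T_1 = R\cdot S$ at $w$, and set $a_0 = \|P\|$, $a_1 = \|Q\|$, $b_1 = \|R\|$, $b_2 = \|S\|$; since $w$ is internal to both $T_0$ and $T_1$, each of these lengths is at least $1$, so $a_1, b_1 \leq 2$. The concatenation $Q\cdot R$ is a closed walk through $v_1$ of length $a_1 + b_1 \leq 4$, a cycle of $D$, so by Observation~\ref{obs1} all of its arcs are symmetric in $D$; in particular the reversals $Q^{-1}$ and $R^{-1}$ are paths of $D$ of lengths $a_1$ and $b_1$ respectively. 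Now consider the closed walk $P\cdot S\cdot T_2$ of length $a_0 + b_2 + \ell_2 \leq 7$. Assuming it is a cycle, the hypothesis leaves at most one non-symmetric arc. If that arc lies in $P$, then $S$ is fully symmetric, so $S^{-1}R^{-1}$ is a $v_2 v_1$-path of length $b_2 + b_1 = \ell_1 \leq 3$ in $D$; otherwise $P$ is fully symmetric, and $P^{-1}$ together with $Q^{-1}$ gives a $v_1 v_0$-path of length $a_1 + a_0 = \ell_0 \leq 3$. Either way, we produce the reverse of an arc of $C$, as required.

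The main obstacle is Case C, specifically the possibility that the closed walk $P\cdot S\cdot T_2$ fails to be a simple cycle due to further coincidences (an additional shared internal vertex, or a vertex of $V(C)$ reappearing inside $P$, $S$ or $T_2$). Each such extra coincidence must be handled separately by invoking Observation~\ref{obs1} and Lemmas~\ref{lem1}--\ref{lem2} on the shorter sub-cycles that arise, and the combinatorial proliferation of these sub-configurations is the source of most of the bookkeeping in the formal proof.
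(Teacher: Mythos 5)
Your overall strategy is the same as the paper's: lift each arc of the triangle to a path of length at most three in $D$, then exhaustively analyse how those paths intersect, applying the symmetric-arc hypothesis to the cycles of $D$ that arise. Cases A and B are correct, and Case A is in fact cleaner than the paper's treatment: your direct count (at most $\frac{\|W\|}{3}-1\le 2$ non-symmetric arcs spread over three sub-paths, so one sub-path is entirely symmetric) replaces the appeal to Lemma \ref{disjoint}. The skeleton of Case C is also sound, and the $P,Q,R,S$ decomposition at the shared vertex $w$ is a tidy way to organize it. Two remarks, though. First, $Q\cdot R$ need not be a cycle (if $\|Q\|=\|R\|=2$ they may share their internal vertex), although then its arcs lie on $2$-cycles and are symmetric anyway. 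Second, $P\cdot S$ is a $v_0v_2$-walk of length $a_0+b_2\le 4$, so whenever $a_0+b_2\le 3$ you already have $(v_0,v_2)\in A(H)$, the reverse of the arc $(v_2,v_0)$ of $C$, and you are done immediately; this is exactly the shortcut the paper uses, and it reduces the hard part of Case C to $a_0=b_2=2$, $a_1=b_1=1$ (so $Q$ and $R$ are single mutually reverse arcs and that first worry evaporates).

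The genuine gap is the paragraph you defer: the sub-cases in which $P\cdot S\cdot T_2$ fails to be a simple cycle because $T_2$ meets $w$ or an internal vertex of $P$ or $S$. This is not a routine loose end --- it is precisely the configuration the paper spends the last third of its proof on (the analysis of how $T_3$ meets $S_1\cup S_2$ when $d(v_1,v_3)=4$). In those configurations the single long-cycle count no longer applies, and the conclusion is reached by a different mechanism: one assembles a specific cycle of length at most five through one vertex of the triangle and the shared internal vertices, concludes from Observation \ref{obs1} that it is entirely symmetric, and then concatenates reversed sub-paths to produce a $v_1v_0$- or $v_2v_1$-path of length at most $3$ in $D$. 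Each placement of the offending vertex requires its own such construction (the paper enumerates them one by one), so ``invoke Observation \ref{obs1} and Lemmas \ref{lem1}--\ref{lem2} on the shorter sub-cycles'' correctly names the toolkit but does not constitute the argument. Nothing in your plan would fail, but until those configurations are written out the proof is incomplete.
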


\begin{proof}
Let $C = (v_1,v_2,v_3,v_1)$ be a $3$-cycle of $H$. If $A(C) \subseteq A(D)$, then every arc of $C$ is symmetric. If $\left|A(C) \cap A(D)\right| = 2$, we can suppose that $(v_2,v_3), (v_3,v_1) \in A(D)$. Since $(v_1,v_2) \in A(H) \setminus A(D)$, there is a path $T$ of length at most three from $v_1$ to $v_2$. Either $v_3 \in V(T)$ or $v_3 \notin V(T)$. In any case, the arc $(v_3,v_1)$ is in a cycle in $D$ of length at most five and therefore is symmetric.

If $\left|A(C) \cap A(D)\right| = 1$, we can assume that $(v_3,v_1) \in A(D)$. Let $T_1$ be a $v_1v_2$-path of length at most three in $D$ and $T_2$ a $v_2v_3$-path of length at most three in $D$. If $v_3 \in V(T_1)$, then the arc $(v_3, v_1)$ is in a cycle in $D$ of length at most three, implying it is symmetric. If $v_1 \in V(T_2)$, then the arc $(v_2, v_1)$ is an arc of $H$, implying it is symmetric. Suppose that neither $v_3 \in V(T_1)$ nor $v_1 \in V(T_2)$. If $V(T_1) \cap V(T_2) = \{ v_2 \}$, then $T_1T_2 \cup (v_3, v_1)$ is a cycle of length at most $7$ in $D$ and has at least $6$ symmetric arcs, so either the arc $(v_3,v_1)$ is symmetric or both $(v_1,v_2)$ and $(v_2,v_3)$ are symmetric. If $V(T_1) \cap V(T_2) \neq \{ v_2 \}$, then the arc $(v_3,v_1)$ is in a cycle in $D$ of length at most five and it is therefore symmetric.

We can now assume that $A(C) \cap A(D) = \varnothing$. Let $T_i$ be the shortest $v_iv_{i+1}$-path of length at most three for $1 \leq i \leq n-1$ and take $S_i = V(T_i) \setminus \left\{v_i, v_{i+1}\right\}$. Also, let $T_n$ be the shortest $v_nv_{1}$-path of length at most three and take $S_n = V(T_n) \setminus \left\{v_1, v_{n}\right\}$. If $S_i \cap V(C) \neq \varnothing$ for some $i \in \left\{1,2,3\right\}$, then clearly $C$ has a symmetric arc. 

Suppose now that $S_i \cap S_j = \varnothing$ for every $i \neq j$. In this case, Lemma \ref{disjoint} gives us the existence of a symmetric arc of $C$. Finally, we must check what happens when there exist $i \neq j$ such that $S_i \cap S_j \neq \varnothing$. We can assume without loss of generality that $i = 1$ and $j = 2$. We must check all the different ways in which $T_1$ and $T_2$ can intersect. Notice that, since $S_1 \cap S_2 \neq \varnothing$, the distance from $v_1$ to $v_3$ is at most four. If $d(v_1,v_3) \leq 3$, then the arc $(v_1,v_3)$ is symmetric. It only remains to check when $d(v_1,v_3) = 4$. In this case, we can assume that $T_1 = (v_1, x_1, y_1, v_2)$ and $T_2 = (v_2, x_2, y_2, v_3)$, where $y_1 = x_2$ and the remaining vertices are all different. If $S_3 \cap \left(S_1 \cup S_2\right) = \varnothing$, then $A = v_1T_1y_1T_2v_3T_3v_1$ is a cycle of length six or seven. If its length is six, the main hypothesis implies that $A$ has at least five symmetric arcs. If its length is seven, the main hypothesis implies that $A$ has at least six symmetric arcs. In either case, either $(v_1,v_2)$ or $(v_2,v_3)$ is symmetric by the Pidgeonhole Principle.

Suppose that $S_3 \cap \left(S_1 \cup S_2\right) \neq \varnothing$. First, take $T_3 = (v_3, x_3, v_1)$. If $x_3 = x_2$, then $v_1T_1x_2T_3v_1$ is a $3$-cycle in $D$.   Hence, it is symmetric in $H$ and this means $(v_2,v_1) \in A(H)$, so it is a symmetric arc. If $x_3 = y_2$, then $v_1T_1x_2T_2y_2T_3v_1$ is a $4$-cycle in $D$, which is symmetric in $H$.   Thus, $(v_2,v_1) \in A(H)$, so it is a symmetric arc. If $x_3 = x_1$, an argument analogous to the one used in the previous case shows that $(v_3,v_2)$ is a symmetric arc of $H$.

Finally, take $T_3 = (v_3, x_3, y_3, v_1)$. If $y_3 \in S_1 \cup S_2$, arguments analogous to the ones used in the case where $T_3$ has length two work. Thus, we can assume that $y_2 \notin S_1 \cup S_2$ and $x_3 \in S_1 \cup S_2$. If $x_3 = x_2$, then $v_1T_1x_2T_3v_1$ is a $4$-cycle in $D$ and hence it is symmetric in $H$.   Therefore $(v_2,v_1) \in A(H)$, and $(v_1,v_2)$ is a symmetric arc. If $x_3 = y_2$, then $v_1T_1x_2T_2y_2T_3v_1$ is a $5$-cycle in $D$, which is symmetric in $H$.   So, $(v_2,v_1) \in A(H)$, and $(v_1,v_2)$ is symmetric. Again, if $x_3 = x_1$, an argument analogous to the previous one shows that $(v_3,v_2)$ is symmetric.

\end{proof}

Now, we can prove an analogue of Theorem \ref{3k} for $4$-kernels. It is not surprising, specially if one compares Lemma \ref{C_3} and Lemma \ref{trian}, that the basic structure of the proof of Theorem \ref{4k} is very similar to the one of Theorem \ref{3k}. Nevertheless, working with $4$-kernels means we have to work with longer paths in the digraph, which involves a few difficulties that are not present in the case of $3$-kernels.

We will work with a cycle in the $4$-closure of a digraph $D$ and the paths in $D$ that originate the arcs in that cycle. The proof consists of four main parts. First, we check what happens when all the paths are internally disjoint. This is easy thanks to Lemma \ref{disjoint}. 

After that, we start working assuming that two of those paths are not internally disjoint. We check what happens when those paths correspond to consecutive arcs in the cycle in the second part. A special case, which we will call an $\omega$-configuration, arises here.

Finally, we study the $\omega$-configurations along with the case where the paths correspond to arcs that are not consecutive in the cycle.

\begin{thm}\label{4k}
Let $D$ be a digraph. If every directed cycle $B$ in $D$ has at least $ \frac{2}{3} \left\| B \right\|  + 1$ symmetric arcs, then every cycle in $H = \mathcal{C}^3(D)$ has a symmetric arc.
\end{thm}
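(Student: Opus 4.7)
The plan is to prove the theorem by strong induction on the length $n = \|C\|$ of the cycle $C$ in $H = \mathcal{C}^3(D)$, with the base case $n = 3$ supplied by Lemma \ref{trian}. For the inductive step, fix $C = (v_1, v_2, \dots, v_n, v_1)$; for each arc $(v_i, v_{i+1}) \notin A(D)$ pick a shortest $v_iv_{i+1}$-path $T_i$ in $D$ (so $\|T_i\| \in \{2,3\}$), let $S_i$ denote its set of internal vertices, and set $T_i = (v_i, v_{i+1})$ with $S_i = \emptyset$ when $(v_i, v_{i+1}) \in A(D)$.

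Following the authors' outline I would split the argument into four blocks. First, if the $S_i$ are pairwise disjoint and meet $V(C)$ only at endpoints, Lemma \ref{disjoint} immediately provides a symmetric arc of $C$. Second, suppose some $S_i \cap S_{i+1} \neq \emptyset$ (WLOG $i=1$) in such a way that the collision yields a $v_1v_3$-walk in $D$ of length at most $3$. Then $(v_1, v_3) \in A(H)$ and $C' = (v_1, v_3) \cup v_3 C v_1$ is a cycle of length $n-1$ in $H$; by induction $C'$ has a symmetric arc, and if that arc is not $(v_1, v_3)$ we are done. Otherwise the resulting short $v_1v_3$ and $v_3v_1$ paths arising in $D$ fit the hypotheses of Lemmas \ref{lem1} and \ref{lem2}, forcing $(v_1, v_2)$ or $(v_2, v_3)$ to be symmetric in $H$.

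Third, the only consecutive collision not absorbed by the previous step is the \emph{$\omega$-configuration}, in which $T_1$ and $T_2$ are length-$3$ paths whose middle vertices are identified (schematically $T_1 = (v_1, a, x, v_2)$ and $T_2 = (v_2, x, b, v_3)$), so the merged walk has length $4$ and yields no immediate arc of $H$. Fourth, the case $S_i \cap S_j \neq \emptyset$ with $|j-i| \geq 2$. For these last two blocks I would follow the pigeonhole strategy of Theorem \ref{3k}: choose $(i,j)$ minimizing $|j-i|$ and WLOG $i=1$, expand $T_2, \dots, T_{j-1}$ into a path $P$ (internally disjoint by the minimality of $|j-i|$), and close it through the shared internal vertex via remnants of $T_1$ and $T_j$ to obtain a cycle $C^*$ in $D$. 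The main hypothesis forces $C^*$ to contain at least $\frac{2}{3}\|C^*\|+1$ symmetric arcs, and a careful count of the arcs of $C^*$ coming from $C$ versus from the expansions, combined with Lemmas \ref{lem1} and \ref{lem2} to upgrade a pair of consecutive symmetric arcs along a $T_\ell$ into symmetry of the corresponding arc of $C$, should close the induction.

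The main obstacle I foresee is precisely the $\omega$-configuration: the shared middle vertex does not shorten $C$ in $H$, so direct induction is unavailable, and the inequality in the pigeonhole step is tighter than in the $k=3$ setting because expansions contribute up to two internal vertices while the symmetric-arc threshold is only $\frac{2}{3}\|C^*\|+1$. I expect the bulk of the work to lie in this bookkeeping, parallel to the analysis of the case $v_{12} = v_{j(j+1)}$ in the proof of Theorem \ref{3k} but with significantly more subcases corresponding to the possible lengths and intersection patterns of the length-$\le 3$ paths, and with more systematic use of Lemma \ref{lem2} to propagate the guaranteed symmetric arcs of $C^*$ back to arcs of $C$.
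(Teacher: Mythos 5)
Your skeleton matches the paper's: induction with Lemma \ref{trian} as the base, Lemma \ref{disjoint} for the disjoint case, shortcut arcs in $H$ plus the induction hypothesis and Lemmas \ref{lem1} and \ref{lem2} for collisions that produce a $v_1v_3$-walk of length at most $3$, and you correctly isolate the $\omega$-configuration as the one consecutive collision that resists this. The gap is in how you propose to dispose of it. Your plan for the ``last two blocks'' is to close the expanded segment into a cycle $C^*$ of $D$ through the shared internal vertex and pigeonhole. For a non-consecutive collision this is workable (and is essentially what the paper does in its final subcases, heavily supplemented by applying the induction hypothesis to the two shorter cycles of $H$ cut off by the shortcut arcs $(v_1,v_{j+1})$ and $(v_j,v_2)$). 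But for the $\omega$-configuration itself there is no such cycle: with $T_1=(v_1,x,y,v_2)$ and $T_2=(v_2,y,w,v_3)$, $w\neq x$, the union of the two paths is just the length-$4$ path $(v_1,x,y,w,v_3)$ together with the symmetric ``spike'' formed by $(v_2,y)$ and $(y,v_2)$; locally no directed cycle of $D$ and no new arc of $H$ arises, so neither the pigeonhole on a cycle of $D$ nor the induction in $H$ applies. Since every collision in $C$ could be a consecutive $\omega$-configuration, a genuinely global argument is unavoidable, and your proposal does not contain one.

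What the paper supplies here, and what is missing from your sketch, is: (i) the transfer mechanism --- the spike makes $(v_2,y)$ symmetric in $D$, so if either inner arc $(x,y)$ or $(y,w)$ is symmetric then $(v_1,v_2)$ or $(v_2,v_3)$ is a symmetric arc of $C$; (ii) the grouping of maximal runs of consecutive $\omega$-configurations into $\omega$-blocks, with improper blocks killed by exhibiting a short cycle of $D$ among their inner arcs, and, in the all-proper case, the assembly of a single long cycle $B$ of $D$ out of $A(C)\cap A(D)$, the expansions of the unblocked arcs, and the inner and outer arcs of all blocks; (iii) the count $\left\| B \right\| = n + 2\mathfrak{O} + 2\mathfrak{L} + \mathfrak{K}$ together with $2\mathfrak{O}+2\mathfrak{L}+\mathfrak{K}\leq \frac{2}{3}\left\| B \right\|$, which forces a symmetric arc of $B$ that transfers to $C$ via (i). Your ``careful count of the arcs of $C^*$'' gestures at (iii) but is applied to a cycle that does not exist in the critical case. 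A smaller inaccuracy: minimizing $|j-i|$ cannot make the interior expansions internally disjoint, precisely because consecutive $\omega$-collisions inside the segment carry no penalty under that minimization; this is why the paper must rerun the $\omega$-block bookkeeping on the segment $v_2Cv_j$ even in its final non-consecutive cases.
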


\begin{proof}

Let $C$ be a cycle in $H$. We proceed by induction on the length of $C$. The case when $C$ has length three is covered by Lemma \ref{trian}.

Suppose then that $C = (v_1,v_2, \dots, v_n,v_0)$ is an $n$-cycle in $H$. For every arc $(u,v) \in A(C)$ there is a directed $uv$-path in $D$ of length at most three (possibly the same arc). For every $1 \leq i \leq n-1$, let $T_i$ be such directed path and $T_n$ be the directed path from $v_n$ to $v_1$, and take $S_i = V(T_i) \setminus \left\{v_i, v_{i+1}\right\}$ for $1 \leq i \leq n-1$ and $S_n = V(T_n) \setminus \left\{v_1, v_n\right\}$. If $S_i \cap S_j = \varnothing$ and $S_i \cap V(C) = \varnothing$ for every $1 \leq i < j \leq n$, then Lemma \ref{disjoint} gives us the desired result.

If $S_i \cap V(C) \neq \varnothing$, then, for some $0 \leq j \leq n$, there is a $v_j \in S_i$. Without loss of generality, we can assume that $\left|j - i \right|$ is minimum with such property and that $i = 1$. This means $(v_1,v_j) \in A(H)$. If $j = n$, then $(v_1,v_n) \in A(H)$ and it is symmetric. If $j = 3$, then $(v_3,v_2) \in A(H)$ and it is symmetric. Hence, we can assume that $j \notin \left\{1,2,3,n\right\}$.

It is easy to see that $(v_1,v_j)$ and $(v_j,v_2)$ are arcs of $H$ (Figure \ref{Cf4}). The cycle $C' = v_1v_jCv_1$ is a cycle in $H$ of length less than $n$, so it has a symmetric arc by induction hypothesis. If such symmetric arc is an arc of $C$, we are done. Otherwise, there is a directed path $P$ of length at most three from $v_j$ to $v_1$. Since the length of the path $v_1T_1v_j$ is at most two, an application of Lemma \ref{lem1} with $P$ and $v_1T_1v_j$ gives us that the arcs of $v_1T_1v_j$ are symmetric. On the other hand, the cycle $C'' = v_2Cv_j \cup (v_j, v_2)$ is a cycle in $H$ of length less than $n$, hence it has a symmetric arc. We can assume that $(v_j,v_2)$ is the symmetric arc, otherwise we are done. Since $(v_j,v_2)$ is symmetric, there is a directed path $Q$ from $v_2$ to $v_j$ of length at most three. Again, applying Lemma \ref{lem1} to $Q$ and $v_jT_1v_2$ proves that the arcs of $v_jT_1v_2$ are symmetric. Since $T_1 = v_1T_1v_jT_1v_2$ and all its arcs are symmetric, we have that $(v_1,v_2)$ is a symmetric arc of $C$.

\begin{figure}[ht]
\begin{center}
\begin{tikzpicture}


\node (1) at (-1,0)[circle, draw,inner sep=0pt, minimum width=5pt,label=180:$v_1$]{};
\node (2) at (0,0)[circle, draw,inner sep=0pt, minimum width=5pt,label=180:$v_2$]{};
\node (3) at (2,0)[circle, draw,inner sep=0pt, minimum width=5pt,label=0:$v_{j}$]{};

\node (4) at (4,0)[circle, draw,inner sep=0pt, minimum width=5pt,label=180:$v_1$]{};
\node (5) at (5,0)[circle, draw,inner sep=0pt, minimum width=5pt,label=180:$v_2$]{};
\node (6) at (7,0)[circle, draw,inner sep=0pt, minimum width=5pt,label=0:$v_{j}$]{};
\node (7) at (6,.5)[circle, draw,inner sep=0pt, minimum width=5pt]{};

\node (8) at (9,0)[circle, draw,inner sep=0pt, minimum width=5pt,label=180:$v_1$]{};
\node (9) at (10,0)[circle, draw,inner sep=0pt, minimum width=5pt,label=180:$v_2$]{};
\node (10) at (12,0)[circle, draw,inner sep=0pt, minimum width=5pt,label=0:$v_{j}$]{};
\node (11) at (10.5,.8)[circle, draw,inner sep=0pt, minimum width=5pt]{};


\node (C) at (.5,-1.3)[]{$v_jCv_1$};
\node (C) at (5.5,-1.3)[]{$v_jCv_1$};
\node (C) at (10.5,-1.3)[]{$v_jCv_1$};

\node (C) at (1,-.5)[]{$v_2Cv_j$};
\node (C) at (6,-.5)[]{$v_2Cv_j$};
\node (C) at (11,-.5)[]{$v_2Cv_j$};


\foreach \from/\to in {1/3,4/6}
\draw [->, shorten <=3pt, shorten >=3pt, >=stealth, line width=.7pt] (\from) to [bend left = 60] (\to);

\foreach \from/\to in {3/2,10/9}
\draw [->, shorten <=3pt, shorten >=3pt, >=stealth, line width=.7pt] (\from) to [bend right = 30] (\to);

\foreach \from/\to in {6/7,7/5}
\draw [->, shorten <=3pt, shorten >=3pt, >=stealth, line width=.7pt] (\from) to [bend right = 0] (\to);

\foreach \from/\to in {2/3,5/6,9/10}
\draw [->,dashed, shorten <=0pt, shorten >=0pt, >=stealth, line width=.4pt] (\from) to [bend right = 20] (\to);

\foreach \from/\to in {3/1,6/4,10/8}
\draw [->, dashed, shorten <=3pt, shorten >=3pt, >=stealth, line width=.7pt] (\from) to [bend left = 90] (\to);

\foreach \from/\to in {8/11,11/10}
\draw [->, shorten <=3pt, shorten >=3pt, >=stealth, line width=.7pt] (\from) to [bend left = 25] (\to);

\end{tikzpicture}
\caption{The case $v_j \in S_1$.} \label{Cf4}
\end{center}
\end{figure}
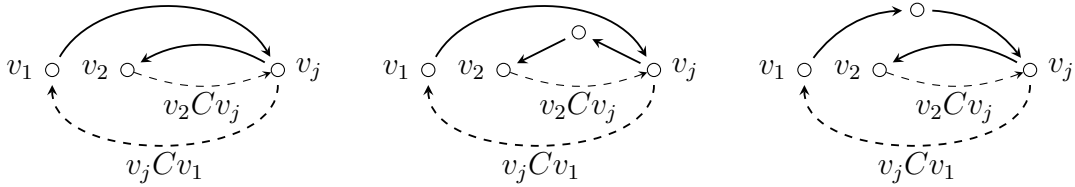

Suppose now that $S_i \cap V(C) = \varnothing$ for every $i \in \left\{1,2, \dots, n\right\}$ but $S_i \cap S_j \neq \varnothing$ for some $1 \leq i < j \leq n$. First, take the case where $v_i$ and $v_j$ are consecutive vertices in the cycle. Without loss of generality, take $i = 1$ and $j = 2$. We will check all the possible ways in which $T_1$ and $T_2$ can intersect. 

Notice that every time $d(v_1,v_3) \leq 3$ we will have that $(v_1,v_3) \in A(H)$. Hence, $(v_1, v_3) \cup v_3Cv_1$ is a cycle of length less than $n$ and the induction hypothesis gives us the existence of a symmetric arc which we can assume to be $(v_1,v_3)$. This is because we would have a symmetric arc of $C$ otherwise. The fact that $(v_1,v_3)$ is symmetric implies there is a path $P$ of length at most $3$ from $v_3$ to $v_1$. We will use this fact whenever we can in all the following cases.

\begin{itemize}

\item $\left\|T_1\right\| = 2 = \left\|T_2\right\|$ (Figure \ref{Cf5}). The only possible way in which they can intersect is when $T_1 = (v_1,x,v_2)$ and $T_2 = (v_2,x,v_3)$. In this case, Lemma \ref{lem1} implies that $(v_1,x)$ is symmetric, so the arc $v_1,v_2$ is a symmetric arc of $C$.

\begin{figure}[ht]
\begin{center}
\begin{tikzpicture}


\node (1) at (-2,0)[circle, draw,inner sep=0pt, minimum width=5pt,label=180:$v_1$]{};
\node (2) at (0,0)[circle, draw,inner sep=0pt, minimum width=5pt,label=270:$v_2$]{};
\node (3) at (2,0)[circle, draw,inner sep=0pt, minimum width=5pt,label=0:$v_{3}$]{};

\node (12) at (0,1.5)[circle, draw,inner sep=0pt, minimum width=5pt,label=90:$x$]{};

\foreach \from/\to in {1/12,12/3}
\draw [->, shorten <=3pt, shorten >=3pt, >=stealth, line width=.7pt] (\from) to [bend right = 0] (\to);

\foreach \from/\to in {12/2,2/12}
\draw [->, shorten <=0pt, shorten >=0pt, >=stealth, line width=.4pt] (\from) to [bend right = 20] (\to);

\end{tikzpicture}
\caption{$\left\|T_1\right\| = 2 = \left\|T_2\right\|$.} \label{Cf5}
\end{center}
\end{figure}

\pagebreak

\item $\left\|T_1\right\| = 2$ and $\left\|T_2\right\| = 3$ (Figure \ref{Cf6}). Let $T_1 = (v_1,x,v_2)$ and $T_2 = (v_2,y,z,v_3)$. 

If $y = x$, then take $Q = (v_1,x,z,v_3)$. By applying Lemma \ref{lem2} we derive that either $(v_1,x)$ is symmetric, in which case $(v_1,v_2)$ is a symmetric arc of $C$, or both $(x,z)$ and $(z,v_3)$ are symmetric, implying that $(v_2,v_3)$ is a symmetric arc of $C$.

If $z = x$, then take $Q = (v_1,z,v_3)$. Here, Lemma \ref{lem1} gives us that $(v_1,x)$ is symmetric, $(v_2,y,x,v_1)$ is a path of length $3$ and therefore the arc $(v_1,v_2)$ is symmetric.

\begin{figure}[ht]
\begin{center}
\begin{tikzpicture}


\node (1) at (-2,0)[circle, draw,inner sep=0pt, minimum width=5pt,label=180:$v_1$]{};
\node (2) at (0,0)[circle, draw,inner sep=0pt, minimum width=5pt,label=270:$v_2$]{};
\node (3) at (2,0)[circle, draw,inner sep=0pt, minimum width=5pt,label=0:$v_{3}$]{};
\node (4) at (1,1.5)[circle, draw,inner sep=0pt, minimum width=5pt,label=0:$z$]{};

\node (12) at (-1,1.5)[circle, draw,inner sep=0pt, minimum width=5pt,label=90:$x$]{};

\node (5) at (5,0)[circle, draw,inner sep=0pt, minimum width=5pt,label=180:$v_1$]{};
\node (6) at (7,0)[circle, draw,inner sep=0pt, minimum width=5pt,label=270:$v_2$]{};
\node (7) at (9,0)[circle, draw,inner sep=0pt, minimum width=5pt,label=0:$v_{3}$]{};
\node (8) at (7,1.5)[circle, draw,inner sep=0pt, minimum width=5pt,label=180:$x$]{};

\node (9) at (8,.7)[circle, draw,inner sep=0pt, minimum width=5pt,label=270:$y$]{};

\foreach \from/\to in {1/12,12/4,4/3,5/8,8/6,6/9,9/8}
\draw [->, shorten <=3pt, shorten >=3pt, >=stealth, line width=.7pt] (\from) to [bend right = 0] (\to);

\foreach \from/\to in {12/2,2/12}
\draw [->, shorten <=1pt, shorten >=1pt, >=stealth, line width=.4pt] (\from) to [bend right = 20] (\to);

\foreach \from/\to in {8/7}
\draw [->, shorten <=1pt, shorten >=1pt, >=stealth, line width=.4pt] (\from) to [bend left = 30] (\to);

\end{tikzpicture}
\caption{$\left\|T_1\right\| = 2$ and $\left\|T_2\right\| = 3$.} \label{Cf6}
\end{center}
\end{figure}

\item $\left\|T_1\right\| = 3$ and $\left\|T_2\right\| = 2$. This is very similar to the previous case. Let $T_1 = (v_1,x,y,v_2)$ and $T_2 = (v_2,z,v_3)$. 

If $z = y$, take $Q = (v_1,x,y,v_3)$. Using Lemma \ref{lem1} we can see that either both arcs in $(v_1,x,y)$ are symmetric, and hence the arc $(v_1,v_2)$ is a symmetric arc of $C$, or $(z,v_3)$ is symmetric, implying $(v_2,v_3)$ is a symmetric arc in $C$.

If $z = x$, then $Q = (v_1,x,v_3)$. Lemma \ref{lem1} guarantees that every arc in $Q$ is symmetric, so $(v_2,x,v_1)$ is a directed path in $D$ and, therefore, the arc $(v_1,v_2)$ is symmetric.

\item $\left\|T_1\right\| = 3 = \left\|T_2\right\|$. Let $T_1 = (v_1,x,y,v_2)$ and $T_2 = (v_2,z,w,v_3)$. 

If $z = x$ and $w = y$ (Figure \ref{Cf7} (a)), take $Q = (v_1,x,y,v_3)$. Now, Lemma \ref{lem2} guarantees that either $(v_1,x)$ or $(y,v_3)$ is symmetric. In the first case we have that $(v_2,x,v_1)$ is a path in $D$ and $(v_1,v_2)$ is symmetric. In the second case, $(v_3,y,v_2)$ is a path in $D$ and $(v_1,v_2)$ is symmetric.

If $z = y$ and $w = x$ (Figure \ref{Cf7} (b)), take $Q = (v_1,x,v_3)$ and apply Lemma \ref{lem1}. We get that $(v_1,x)$ is symmetric and this means $(v_1,v_2)$ is a symmetric arc of $C$.

\begin{figure}[ht]
\begin{center}
\begin{tikzpicture}


\node (1) at (-2,0)[circle, draw,inner sep=0pt, minimum width=5pt,label=180:$v_1$]{};
\node (2) at (0,0)[circle, draw,inner sep=0pt, minimum width=5pt,label=270:$v_2$]{};
\node (3) at (2,0)[circle, draw,inner sep=0pt, minimum width=5pt,label=0:$v_{3}$]{};
\node (4) at (-1,1.5)[circle, draw,inner sep=0pt, minimum width=5pt,label=90:$x$]{};
\node (5) at (1,1.5)[circle, draw,inner sep=0pt, minimum width=5pt,label=0:$y$]{};

\node (C) at (0,-1)[]{(a)};
\node (P) at (6,-1)[]{(b)};

\node (6) at (4,0)[circle, draw,inner sep=0pt, minimum width=5pt,label=180:$v_1$]{};
\node (7) at (6,0)[circle, draw,inner sep=0pt, minimum width=5pt,label=270:$v_2$]{};
\node (8) at (8,0)[circle, draw,inner sep=0pt, minimum width=5pt,label=0:$v_{3}$]{};
\node (9) at (5,1.5)[circle, draw,inner sep=0pt, minimum width=5pt,label=90:$x$]{};
\node (10) at (7,1.5)[circle, draw,inner sep=0pt, minimum width=5pt,label=0:$y$]{};

\foreach \from/\to in {1/4,4/5,5/2,2/4,5/3,6/9,9/8}
\draw [->, shorten <=3pt, shorten >=3pt, >=stealth, line width=.7pt] (\from) to [bend right = 0] (\to);

\foreach \from/\to in {7/10,10/7,9/10,10/9}
\draw [->, shorten <=1pt, shorten >=1pt, >=stealth, line width=.4pt] (\from) to [bend right = 15] (\to);

\foreach \from/\to in {}
\draw [->, shorten <=1pt, shorten >=1pt, >=stealth, line width=.4pt] (\from) to [bend left = 30] (\to);

\end{tikzpicture}
\caption{$\left\|T_1\right\| = 2$ and $\left\|T_2\right\| = 3$.} \label{Cf7}
\end{center}
\end{figure}

If $z = x$ and $w \neq y$ (Figure \ref{Cf8} (a)), take $Q = (v_1,x,w,v_3)$ . Notice that $(x,y,v_2,x)$ is a $C_3$ of $D$, so it is symmetric. Applying now Lemma \ref{lem2} gives us that either $(v_1,x)$ is symmetric, implying that $(v_1,v_2)$ is a symmetric arc of $C$, or both $(z,w)$ and $(w,v_3)$ are symmetric, hence $(v_2,v_3)$ is a symmetric arc of $C$.

If $z \neq x$ and $w = y$ (Figure \ref{Cf8} (b)), take $Q = (v_1,x,y,v_3)$. In a way similar to the previous case, $(v_2,z,y,v_2)$ is a $C_3$ of $D$, so it is symmetric. An application of Lemma \ref{lem2} gives us that either $(v_1,v_2)$ or  $(v_2,v_3)$ is a symmetric arc of $C$.

If $z \neq y$ and $w = x$ (Figure \ref{Cf8} (c)), we have $Q = (v_1,x,v_3)$. Here, an application of Lemma \ref{lem1} gives us that $(v_1,x)$ is symmetric. This means that $(v_2,z,x,v_1)$ is a path in $D$ and therefore $(v_1,v_2)$ is symmetric.

\begin{figure}[ht]
\begin{center}
\begin{tikzpicture}


\node (1) at (-2,0)[circle, draw,inner sep=0pt, minimum width=5pt,label=180:$v_1$]{};
\node (2) at (0,0)[circle, draw,inner sep=0pt, minimum width=5pt,label=270:$v_2$]{};
\node (3) at (2,0)[circle, draw,inner sep=0pt, minimum width=5pt,label=0:$v_{3}$]{};
\node (4) at (-1,1.5)[circle, draw,inner sep=0pt, minimum width=5pt,label=90:$x$]{};
\node (5) at (1,.75)[circle, draw,inner sep=0pt, minimum width=5pt,label=270:$y$]{};
\node (6) at (1,1.5)[circle, draw,inner sep=0pt, minimum width=5pt,label=0:$w$]{};

\node (C) at (0,-1)[]{(a)};
\node (P) at (6,-1)[]{(b)};
\node (P) at (3,-5)[]{(c)};

\node (7) at (4,0)[circle, draw,inner sep=0pt, minimum width=5pt,label=180:$v_1$]{};
\node (8) at (6,0)[circle, draw,inner sep=0pt, minimum width=5pt,label=270:$v_2$]{};
\node (9) at (8,0)[circle, draw,inner sep=0pt, minimum width=5pt,label=0:$v_{3}$]{};
\node (10) at (5,1.5)[circle, draw,inner sep=0pt, minimum width=5pt,label=90:$x$]{};
\node (11) at (7,1.5)[circle, draw,inner sep=0pt, minimum width=5pt,label=90:$y$]{};
\node (12) at (5,.75)[circle, draw,inner sep=0pt, minimum width=5pt,label=270:$z$]{};

\node (13) at (1,-4)[circle, draw,inner sep=0pt, minimum width=5pt,label=180:$v_1$]{};
\node (14) at (3,-4)[circle, draw,inner sep=0pt, minimum width=5pt,label=270:$v_2$]{};
\node (15) at (5,-4)[circle, draw,inner sep=0pt, minimum width=5pt,label=0:$v_{3}$]{};
\node (16) at (3,-2)[circle, draw,inner sep=0pt, minimum width=5pt,label=90:$x$]{};
\node (17) at (3.5,-3)[circle, draw,inner sep=0pt, minimum width=5pt,label=290:$y$]{};
\node (18) at (2.5,-3)[circle, draw,inner sep=0pt, minimum width=5pt,label=250:$z$]{};

\foreach \from/\to in {1/4,4/6,5/2,2/4,6/3,4/5,7/10,10/11,11/8,8/12,12/11,11/9,13/16,16/17,17/14,14/18,18/16}
\draw [->, shorten <=3pt, shorten >=3pt, >=stealth, line width=.7pt] (\from) to [bend right = 0] (\to);

\foreach \from/\to in {16/15}
\draw [->, shorten <=3pt, shorten >=3pt, >=stealth, line width=.7pt] (\from) to [bend left = 0] (\to);

\end{tikzpicture}
\caption{$\left\|T_1\right\| = 3 = \left\|T_2\right\|$} \label{Cf8}
\end{center}
\end{figure}

\end{itemize}

The only remaining case is when $T_1 = (v_1,x,y,v_2)$,  $T_2 = (v_2,z,w,v_3)$ and we have $z = y$ and $w \neq x$. Let use call this case an $\omega$-configuration and say that $T_1$ and $T_2$ intersect in an $\omega$-configuration. The arcs $(x,y)$ and $(y,w)$ will be called the \emph{inner arcs} of the $\omega$-configuration formed by $T_1$ and $T_2$, and we will use $\iota (T_1, T_2)$ to denote the set $\left\{(x,y), (y,w)\right\}$. The symmetric arc $(v_2,y)$ will be called the \emph{spike} of the $\omega$-configuration (see Figure \ref{Cf9}) formed by $T_1$ and $T_2$ and we will use $\sigma(T_1,T_2)$ to denote the set $\left\{(v_2,y),(y,v_2)\right\}$. The arcs $(v_1,x)$ and $(w,v_3)$ will be called the \emph{outer} arcs and the set $\left\{(v_1,x),(w,v_3)\right\}$ will be denoted by $\epsilon(T_1,T_2)$.

\begin{figure}[ht]
\begin{center}
\begin{tikzpicture}


\node (1) at (-2,0)[circle, draw,inner sep=0pt, minimum width=5pt,label=270:$v_1$]{};
\node (2) at (0,0)[circle, draw,inner sep=0pt, minimum width=5pt,label=270:$v_2$]{};
\node (3) at (2,0)[circle, draw,inner sep=0pt, minimum width=5pt,label=270:$v_{3}$]{};
\node (4) at (-2,1)[circle, draw,inner sep=0pt, minimum width=5pt,label=90:$x$]{};
\node (5) at (0,1)[circle, draw,inner sep=0pt, minimum width=5pt,label=90:$y$]{};
\node (6) at (2,1)[circle, draw,inner sep=0pt, minimum width=5pt,label=90:$w$]{};

\foreach \from/\to in {1/4,4/5,5/6,6/3}
\draw [->, shorten <=3pt, shorten >=3pt, >=stealth, line width=.7pt] (\from) to [bend right = 0] (\to);

\foreach \from/\to in {5/2,2/5}
\draw [->, shorten <=1pt, shorten >=1pt, >=stealth, line width=.7pt] (\from) to [bend left = 30] (\to);

\end{tikzpicture}
\caption{The $\omega$-configuration.} \label{Cf9}
\end{center}
\end{figure}
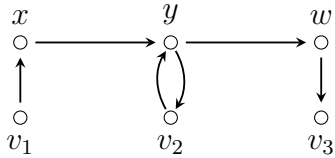

Since in this case we do not have a directed path from $v_1$ to $v_3$ of length less than $3$, we must proceed in a different manner. We can assume that whenever there are $T_i$ and $T_j$ such that $S_i \cap S_j \neq \varnothing$ either there are $i \leq l_1 < l_2 \leq j$ such that $S_{l_1} \cap S_{l_2} \neq \varnothing$ with $i \neq l_1$ or $j \neq l_2$, or $v_i$ and $v_j$ are consecutive in $C$ and the intersection between $T_i$ and $T_j$ is an $\omega$-configuration.

Let $I = \left\{v_k, v_{k+1}, \dots, v_{k+t}\right\}$ a set of consecutive vertices of the cycle $C$, where the subscripts are taken in the natural way induced by the cycle. We say that $I$ is a $\omega$-block if the following conditions are satisfied:

\begin{enumerate}

\item $(v_r,v_{r+1}) \notin A(D)$. 

\item The intersection between $T_r$ and $T_{r+1}$ is a $\omega$-configuration for every $k \leq r \leq k+t$. 

\item Either $(v_{k-1},v_{k})  \in A(D)$ or $S_{k-1} \cap S_k = \varnothing$.

\item Either $(v_{k+t},v_{k+t+1})  \in A(D)$ or $S_{k+t} \cap S_{k+t+1} = \varnothing$.

\end{enumerate}

An $\omega$-block $I = \left\{v_k, v_{k+1}, \dots, v_{k+t}\right\}$ will be called proper if $S_i \cap S_j = \varnothing$ when $i$ and $j$ are not consecutive, like in Figure \ref{Cf10}. Otherwise, $I$ will be called improper. An example of an improper $\omega$-block can be seen in Figure \ref{Cf11}. Clearly, improper $\omega$-blocks have at least four vertices.

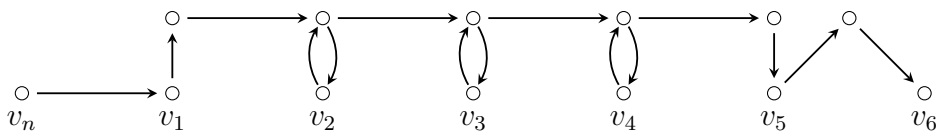
\begin{figure}[ht]
\begin{center}
\begin{tikzpicture}


\node (1) at (-2,0)[circle, draw,inner sep=0pt, minimum width=5pt,label=270:$v_1$]{};
\node (2) at (0,0)[circle, draw,inner sep=0pt, minimum width=5pt,label=270:$v_2$]{};
\node (3) at (2,0)[circle, draw,inner sep=0pt, minimum width=5pt,label=270:$v_{3}$]{};
\node (4) at (-2,1)[circle, draw,inner sep=0pt, minimum width=5pt]{};
\node (5) at (0,1)[circle, draw,inner sep=0pt, minimum width=5pt]{};
\node (6) at (2,1)[circle, draw,inner sep=0pt, minimum width=5pt]{};
\node (7) at (4,0)[circle, draw,inner sep=0pt, minimum width=5pt,label=270:$v_4$]{};
\node (8) at (6,0)[circle, draw,inner sep=0pt, minimum width=5pt,label=270:$v_{5}$]{};
\node (9) at (4,1)[circle, draw,inner sep=0pt, minimum width=5pt]{};
\node (10) at (6,1)[circle, draw,inner sep=0pt, minimum width=5pt]{};
\node (a) at (-4,0)[circle, draw,inner sep=0pt, minimum width=5pt,label=270:$v_n$]{};
\node (b) at (8,0)[circle, draw,inner sep=0pt, minimum width=5pt,label=270:$v_6$]{};
\node (c) at (7,1)[circle, draw,inner sep=0pt, minimum width=5pt]{};

\foreach \from/\to in {1/4,4/5,5/6,6/9,9/10,10/8,8/c,c/b,a/1}
\draw [->, shorten <=3pt, shorten >=3pt, >=stealth, line width=.7pt] (\from) to [bend right = 0] (\to);

\foreach \from/\to in {5/2,2/5,6/3,3/6,9/7,7/9}
\draw [->, shorten <=1pt, shorten >=1pt, >=stealth, line width=.7pt] (\from) to [bend left = 30] (\to);

\end{tikzpicture}
\caption{The set $\left\{v_1,v_2,v_3,v_4,v_5\right\}$ is a proper $\omega$-block.} \label{Cf10}
\end{center}
\end{figure}

\begin{figure}[ht]
\begin{center}
\begin{tikzpicture}


\node (1) at (-2,0)[circle, draw,inner sep=0pt, minimum width=5pt,label=270:$v_1$]{};
\node (2) at (0,0)[circle, draw,inner sep=0pt, minimum width=5pt,label=270:$v_2$]{};
\node (3) at (2,0)[circle, draw,inner sep=0pt, minimum width=5pt,label=270:$v_{3}$]{};
\node (4) at (-2,1)[circle, draw,inner sep=0pt, minimum width=5pt]{};
\node (5) at (0,1)[circle, draw,inner sep=0pt, minimum width=5pt,label=90:$a$]{};
\node (6) at (2,1)[circle, draw,inner sep=0pt, minimum width=5pt,label=90:$b$]{};
\node (7) at (4,0)[circle, draw,inner sep=0pt, minimum width=5pt,label=270:$v_4$]{};
\node (8) at (6,0)[circle, draw,inner sep=0pt, minimum width=5pt,label=270:$v_{5}$]{};
\node (9) at (4,1)[circle, draw,inner sep=0pt, minimum width=5pt,label=90:$c$]{};
\node (a) at (-4,0)[circle, draw,inner sep=0pt, minimum width=5pt,label=270:$v_n$]{};
\node (b) at (8,0)[circle, draw,inner sep=0pt, minimum width=5pt,label=270:$v_6$]{};
\node (c) at (7,1)[circle, draw,inner sep=0pt, minimum width=5pt]{};


\foreach \from/\to in {1/4,4/5,5/6,6/9,8/c,c/b,a/1}
\draw [->, shorten <=3pt, shorten >=3pt, >=stealth, line width=.7pt] (\from) to [bend right = 0] (\to);

\foreach \from/\to in {5/2,2/5,6/3,3/6,9/7,7/9}
\draw [->, shorten <=1pt, shorten >=1pt, >=stealth, line width=.7pt] (\from) to [bend left = 30] (\to);

\foreach \from/\to in {5/8}
\draw [->, shorten <=1pt, shorten >=1pt, >=stealth, line width=.7pt] (\from) to [bend left = 60] (\to);

\foreach \from/\to in {9/5}
\draw [->, shorten <=3pt, shorten >=3pt, >=stealth, line width=.7pt] (\from) to [bend right = 30] (\to);

\end{tikzpicture}
\caption{The set $\left\{v_1,v_2,v_3,v_4,v_5\right\}$ is an improper $\omega$-block.} \label{Cf11}
\end{center}
\end{figure}
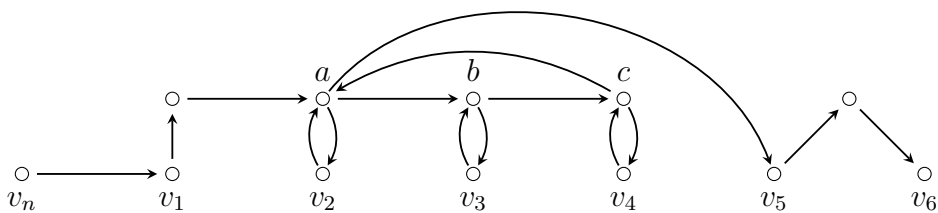

Let $I = \left\{v_k, v_{k+1}, \dots, v_{k+t}\right\}$ be an improper $\omega$-block. This means that there are integers $k_1, k_2$ such that $k \leq k_1 < k_1 + 1 < k_2 \leq k+t$ and $S_{k_1}\cap S_{k_2} \neq \varnothing$. We can assume that $k_2 - k_1$ is minimum with such property and that $k_1 = 1$ and $k_2 = j$. Let $T_1 = (v_1,x,y,v_2)$ and $T_j = (v_j,z,w,v_{j+1})$. Since $z \in T_{j-1}$, the minimality of $j - 1$ guarantees that $z \notin \left\{x,y\right\}$. This means that either $w = x$ or $w = y$.

If $w = x$, take the cycle $B$ in $D$ that is induced by the arc set $ \left\{(z,x)\right\} \cup E$, where $$E = \bigcup ^{j-1}_{r = 1} \iota(T_r,T_{r+1}). $$

Since $2 < j$, we have that $E \neq \varnothing$. In Figure \ref{Cf11}, the cycle $(a,b,c,a)$ is the cycle $B$. If $\left\| (B) \right\| \leq 5$, then $B$ is symmetric and and every arc in $E$ is symmetric, so the arc $(v_2,v_3)$ is a symmetric arc in $C$. If $\left\| B \right\| \geq 6$, then it has at least five symmetric arcs. Clearly $\left|A(B) \setminus E\right| = 1$, so there is a symmetric arc in $E$ and, therefore, a symmetric arc in $C$. The case where $x = y$ is analogous.

From now on, we can assume that if two consecutive vertices $v_i,v_{i+1}$ satisfy $S_i \cap S_{i+1} \neq \varnothing$, then $T_i$ and $T_{i+1}$ intersect in a $\omega$-configuration and there exists an $\omega$-block $I$ such that $v_i,v_{i+1} \in I$. Also, we can suppose that every $\omega$-block is proper.

First, suppose that whenever there are $T_i$ and $T_j$ such that $S_i \cap S_j \neq \varnothing$, we have $i + 1 = j$ (with indices taken in the natural way along the cycle). Clearly, this means that for every arc $(v_i, v_{i+1}) \in A(C)$, exactly one of the following conditions is fulfilled:

\begin{itemize}

\item $(v_i, v_{i+1}) \in A(D)$.

\item $S_i \cap S_j$ for every $j \neq i$.

\item There is an $\omega$-block $I$ such that $v_i,v_{i+1} \in I$.

\end{itemize}

Let $\mathfrak{L}, \mathfrak{K}, \Omega, \mathfrak{O}$ and $\alpha$ be defined as follows:

\begin{itemize}

\item $\mathfrak{L} = \left|\left\{T_i: 2 \leq i \leq j, \left\|T_i\right\| = 3\right\}\right|$.

\item $\mathfrak{K} = \left|\left\{T_i: 2 \leq i \leq j, \left\|T_i\right\| = 2\right\}\right|$.

\item $\Omega$ is the set of all the $\omega$-blocks contained in $V(C)$.

\item $\mathfrak{O} = \left|\Omega\right|$.

\item $\alpha = \left\{(x,y) \in A(C) \setminus A(D): \left\{x,y\right\} \not\subset I, I \in \Omega \right\}$.

\end{itemize}

It is easy to see that $B$, the cycle induced by $$\left[A(C)\cap A(D)\right] \cup \left[\bigcup_{(v_i,v_{i+1}) \in \alpha} A(T_i)\right] \cup \left[\bigcup_{I \in \Omega} \iota_I\right] \cup \left[\bigcup_{I \in \Omega} \epsilon_I\right],$$ has length $n + 2\mathfrak{O} + 2\mathfrak{L} + \mathfrak{K}$.

Since $\mathfrak{O} + \mathfrak{L} \leq n$ and $\mathfrak{O} + \mathfrak{L} + \mathfrak{K} \leq n$, we have the following inequalities:

\begin{center}
\begin{eqnarray*}
2\mathfrak{O} + 2\mathfrak{L} + \mathfrak{K}&\leq& 2n\\
6\mathfrak{O} + 6\mathfrak{L} + 3\mathfrak{K}&\leq& 2n + 4\mathfrak{O} + 4\mathfrak{L} + 2\mathfrak{K}\\
2\mathfrak{O} + 2\mathfrak{L} + \mathfrak{K}&\leq& \frac{2}{3}\left(n + 2\mathfrak{O} + 2\mathfrak{L} + \mathfrak{K}\right)
	\end{eqnarray*}
\end{center}

The main hypothesis implies that $B$ has at least $\frac{2}{3}\left(n + 2\mathfrak{O} + 2\mathfrak{L} + \mathfrak{K}\right) + 1$ symmetric arcs, so $C$ has at least one symmetric arc.

Suppose now that there are positive integers $i,j$ such that $S_i \cap S_j \neq \varnothing$ and $1 \leq i < j < n$. Again, we must check every way in which $S_i$ and $S_j$ may intersect. Suppose that $j-i$ is minimum with these properties and, without loss of generality, that $i = 1$. First, let us check the most direct cases.

\begin{enumerate}

\item $\left\|T_1\right\| = 2 = \left\|T_j\right\|$ (Figure \ref{Cf12}). The only possible way in which they can intersect is when $T_1 = (v_1,x,v_2)$ and $T_j = (v_j,x,v_{j+1})$. Let $P_1 = (v_1,x,v_{j+1})$ and $P_2 = (v_j,x,v_2)$. Clearly, the arcs $(v_j,v_2)$ and $(v_1,v_{j+1})$ are arcs of $H$. Take $B_1 = (v_1, v_{j+1}) \cup v_{j+1}Cv_1$ and $B_2 = v_2Cv_{j+1} \cup (v_{j+1}, v_2)$. Clearly, $B_1$ and $B_2$ have length less than $n$ and, by induction hypothesis, they have a symmetric arc. We can assume that the symmetric arc in $B_1$ is $(v_1,v_{j+1})$ and the one in $B_2$ is $(v_j,v_{2})$, since otherwise we would have a symmetric arc in $C$. This means that there is a directed $v_{j+1}v_1$-path and a directed $v_{2}v_j$-path in $D$. Let us call them $Q_1$ and $Q_2$ respectively. Now, applying Lemma \ref{lem1} gives us that the arcs in $T_1$ and $T_j$ are symmetric, so $(v_1,v_2)$ and $(v_j,v_{j+1})$ are symmetric arcs in $C$.

\begin{figure}[ht]
\begin{center}
\begin{tikzpicture}


\node (1) at (-1,0)[circle, draw,inner sep=0pt, minimum width=5pt,label=180:$v_1$]{};
\node (2) at (0,0)[circle, draw,inner sep=0pt, minimum width=5pt,label=270:$v_2$]{};
\node (3) at (2,0)[circle, draw,inner sep=0pt, minimum width=5pt,label=0:$v_{j}$]{};
\node (4) at (3,0)[circle, draw,inner sep=0pt, minimum width=5pt,label=0:$v_{j+1}$]{};

\node (12) at (1,1)[circle, draw,inner sep=0pt, minimum width=5pt,label=90:$v_{12}$]{};
\node (C) at (1,-1)[]{$v_{j+1}Cv_1$};
\node (P) at (1,.1)[]{$v_{2}Cv_j$};

\foreach \from/\to in {1/12,12/2,3/12,12/4}
\draw [->, shorten <=3pt, shorten >=3pt, >=stealth, line width=.7pt] (\from) to [bend right = 0] (\to);

\foreach \from/\to in {2/3}
\draw [->,dashed, shorten <=3pt, shorten >=3pt, >=stealth, line width=.4pt] (\from) to [bend right = 15] (\to);

\foreach \from/\to in {4/1}
\draw [->,dashed, shorten <=3pt, shorten >=3pt, >=stealth, line width=.7pt] (\from) to [bend left = 100] (\to);

\end{tikzpicture}
\caption{The case $\left\|T_1\right\| = 2 = \left\|T_j\right\|$.} \label{Cf12}
\end{center}
\end{figure}

\item If $\left\|T_1\right\| = 2$ and $\left\|T_j\right\| = 3$. Let $T_1 = (v_1,x,v_2)$ and $T_j = (v_j,y,z,v_{j+1})$. 

If $x = y$  (see Figure \ref{Cf13} (a)), then we can see that the arcs $(v_j,x)$ and $(x,v_2)$ are symmetric arcs of $D$ just like in the previous case. Now, since the arc $(v_1,v_{j+1}) \in A(H)$, we have that $B_1 = v_1v_{j+1}Cv_1$ is a cycle of length less than $n$, so it has a symmetric arc and we can assume it is $(v_1,v_{j+1})$. Hence, there is a path $Q_1$ from $v_{j+1}$ to $v_1$ of length at most three. By applying Lemma \ref{lem2}, we get that either $(v_1,x)$ is a symmetric arc of $D$ and so $(v_1,v_2)$ is a symmetric arc in $C$, or both $(x,y)$ and $(y,v_{j+1})$ are symmetric arcs of $D$ and thus $(v_j,v{j+1})$ is a symmetric arc in $C$. The case $x = z$  (Figure \ref{Cf13} (b)) is very similar.

\begin{figure}[ht]
\begin{center}
\begin{tikzpicture}


\node (1) at (-1,0)[circle, draw,inner sep=0pt, minimum width=5pt,label=180:$v_1$]{};
\node (2) at (0,0)[circle, draw,inner sep=0pt, minimum width=5pt,label=180:$v_2$]{};
\node (3) at (1.3,0)[circle, draw,inner sep=0pt, minimum width=5pt,label=0:$v_{j}$]{};
\node (4) at (.65,1.5)[circle, draw,inner sep=0pt, minimum width=5pt,label=90:$x$]{};
\node (5) at (1.475,.75)[circle, draw,inner sep=0pt, minimum width=5pt,label=0:$z$]{};
\node (6) at (2.3,0)[circle, draw,inner sep=0pt, minimum width=5pt,label=0:$v_{j+1}$]{};

\node (C) at (.65,-1.5)[]{(a)};
\node (P) at (6.65,-1.5)[]{(b)};

\node (7) at (5,0)[circle, draw,inner sep=0pt, minimum width=5pt,label=180:$v_1$]{};
\node (8) at (6,0)[circle, draw,inner sep=0pt, minimum width=5pt,label=180:$v_2$]{};
\node (9) at (7.3,0)[circle, draw,inner sep=0pt, minimum width=5pt,label=0:$v_{j}$]{};
\node (10) at (6.65,1.5)[circle, draw,inner sep=0pt, minimum width=5pt,label=90:$x$]{};
\node (11) at (6.975,.75)[circle, draw,inner sep=0pt, minimum width=5pt,label=180:$y$]{};
\node (12) at (8.3,0)[circle, draw,inner sep=0pt, minimum width=5pt,label=0:$v_{j+1}$]{};

\node (C) at (.65,-.7)[]{$v_{j+1}Cv_1$};
\node (P) at (6.65,-.7)[]{$v_{j+1}Cv_1$};

\foreach \from/\to in {1/4,4/2,3/4,4/5,5/6,7/10,10/8,9/11,11/10}
\draw [->, shorten <=3pt, shorten >=3pt, >=stealth, line width=.7pt] (\from) to [bend right = 0] (\to);

\foreach \from/\to in {10/12}
\draw [->, shorten <=1pt, shorten >=1pt, >=stealth, line width=.4pt] (\from) to [bend left = 0] (\to);

\foreach \from/\to in {6/1,12/7}
\draw [->, dashed, shorten <=1pt, shorten >=1pt, >=stealth, line width=.4pt] (\from) to [bend left = 80] (\to);

\foreach \from/\to in {2/3,8/9}
\draw [->, decorate, decoration={snake,amplitude=.4mm,segment length=2mm,post length=1mm}, shorten <=1pt, shorten >=1pt, >=stealth, line width=.4pt] (\from) to [bend left = 0] (\to);

\end{tikzpicture}
\caption{$\left\|T_1\right\| = 2$ and $\left\|T_2\right\| = 3$.} \label{Cf13}
\end{center}
\end{figure}

\item $\left\|T_1\right\| = 3$ and $\left\|T_j\right\| = 2$. Let $T_1 = (v_1,x,y,v_2)$ and $T_j = (v_j,z,v_{j+1})$. This case is similar to the previous one.

\item $\left\|T_1\right\| = 3 = \left\|T_j\right\|$. Let $T_1 = (v_1,x,y,v_2)$ and $T_j = (v_j,z,w,v_{j+1})$. 

Consider first that $z = y$ and $w = x$ (Figure \ref{Cf14}). Here, we have that $(v_1,v_{j+1}), (v_j,v_{2}) \in A(H)$. By taking $B_1 = (v_1, v_{j+1}) \cup v_{j+1}Cv_1$ and $B_2 = v_2Cv_j \cup (v_j, v_2)$ we can show that the arcs $(v_1,x), (x,v_{j+1}), (v_j,y)$ and $(y,v_{2})$ are symmetric arcs of $D$ just like we did before, so both arcs $(v_1,v_{2})$ and $(v_j,v_{j+1})$ are symmetric arcs in $C$.

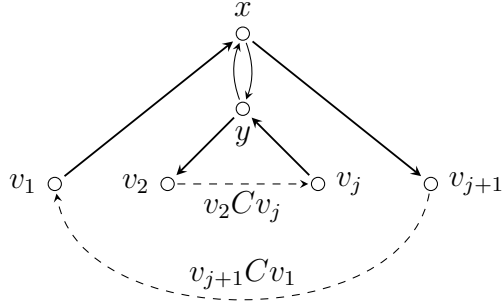
\begin{figure}[ht]
\begin{center}
\begin{tikzpicture}


\node (1) at (-1.5,0)[circle, draw,inner sep=0pt, minimum width=5pt,label=180:$v_1$]{};
\node (2) at (0,0)[circle, draw,inner sep=0pt, minimum width=5pt,label=180:$v_2$]{};
\node (3) at (2,0)[circle, draw,inner sep=0pt, minimum width=5pt,label=0:$v_{j}$]{};
\node (4) at (1,2)[circle, draw,inner sep=0pt, minimum width=5pt,label=90:$x$]{};
\node (5) at (1,1)[circle, draw,inner sep=0pt, minimum width=5pt,label=270:$y$]{};
\node (6) at (3.5,0)[circle, draw,inner sep=0pt, minimum width=5pt,label=0:$v_{j+1}$]{};

\node (C) at (1,-1.2)[]{$v_{j+1}Cv_1$};
\node (P) at (1,-.3)[]{$v_{2}Cv_j$};

\foreach \from/\to in {1/4,5/2,3/5,4/6}
\draw [->, shorten <=2pt, shorten >=2pt, >=stealth, line width=.7pt] (\from) to [bend right = 0] (\to);

\foreach \from/\to in {4/5,5/4}
\draw [->, shorten <=1pt, shorten >=1pt, >=stealth, line width=.4pt] (\from) to [bend left = 20] (\to);

\foreach \from/\to in {6/1}
\draw [->, dashed, shorten <=1pt, shorten >=1pt, >=stealth, line width=.4pt] (\from) to [bend left = 80] (\to);

\foreach \from/\to in {2/3}
\draw [->, dashed, shorten <=1pt, shorten >=1pt, >=stealth, line width=.4pt] (\from) to [bend left = 0] (\to);

\end{tikzpicture}
\caption{The case $z = y$ and $w = x$.} \label{Cf14}
\end{center}
\end{figure}

Now, consider $z \neq y$ and $w = x$ (Figure \ref{Cf15}). In this case, we have $(v_1,v_{j+1}), (v_j,x), (x,v_2) \in A(H)$. By taking $B_1 = (v_1, v_{j+1}) \cup v_{j+1}Cv_1$ we can conclude that $(v_1,x)$ is a symmetric arc in $D$. Now, take $B_2 = (v_j, x, v_2) \cup v_2Cv_j$. It is easy to see that $B_2$ has length less than $n$, so it has a symmetric arc. Again, we can assume that the symmetric arc is either $(v_j,x)$ or $(x,v_2)$. If $(v_j,x)$ is symmetric, there is a directed path from $x$ to $v_j$ of length at most three in $D$. Let $Q$ be such directed path. Here, Lemma \ref{lem1} guarantees that both $(v_j,z)$ and $(z,x)$ are symmetric arcs in $D$, and so $(v_j,v_{j+1})$ is a symmetric arc in $C$. The case where $(x, v_2)$ is symmetric is analogous.

\begin{figure}[ht]
\begin{center}
\begin{tikzpicture}


\node (1) at (-1.5,0)[circle, draw,inner sep=0pt, minimum width=5pt,label=180:$v_1$]{};
\node (2) at (0,0)[circle, draw,inner sep=0pt, minimum width=5pt,label=180:$v_2$]{};
\node (3) at (2,0)[circle, draw,inner sep=0pt, minimum width=5pt,label=0:$v_{j}$]{};
\node (4) at (1,1.5)[circle, draw,inner sep=0pt, minimum width=5pt,label=90:$x$]{};
\node (5) at (.5,.75)[circle, draw,inner sep=0pt, minimum width=5pt,label=180:$y$]{};
\node (6) at (3.5,0)[circle, draw,inner sep=0pt, minimum width=5pt,label=0:$v_{j+1}$]{};
\node (7) at (1.5,.75)[circle, draw,inner sep=0pt, minimum width=5pt,label=0:$z$]{};

\node (C) at (1,-1.2)[]{$v_{j+1}Cv_1$};
\node (P) at (1,-.3)[]{$v_{2}Cv_j$};

\foreach \from/\to in {1/4,5/2,3/7,7/4}
\draw [->, shorten <=2pt, shorten >=2pt, >=stealth, line width=.7pt] (\from) to [bend right = 0] (\to);

\foreach \from/\to in {4/5}
\draw [->, shorten <=1pt, shorten >=1pt, >=stealth, line width=.4pt] (\from) to [bend left = 0] (\to);

\foreach \from/\to in {4/6}
\draw [->, shorten <=1pt, shorten >=1pt, >=stealth, line width=.4pt] (\from) to [bend left = 0] (\to);

\foreach \from/\to in {6/1}
\draw [->, dashed, shorten <=1pt, shorten >=1pt, >=stealth, line width=.4pt] (\from) to [bend left = 80] (\to);

\foreach \from/\to in {2/3}
\draw [->, dashed, shorten <=1pt, shorten >=1pt, >=stealth, line width=.4pt] (\from) to [bend left = 0] (\to);

\end{tikzpicture}
\caption{The case $z \neq y$ and $w = x$.} \label{Cf15}
\end{center}
\end{figure}
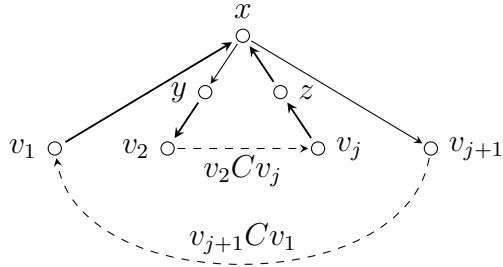

When $z = y$ and $w \neq x$ (see Figure \ref{Cf16}), this case is solved similarly to the case $z \neq y$ and $w = x$.

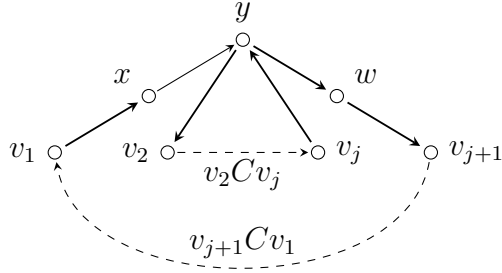
\begin{figure}[ht]
\begin{center}
\begin{tikzpicture}


\node (1) at (-1.5,0)[circle, draw,inner sep=0pt, minimum width=5pt,label=180:$v_1$]{};
\node (2) at (0,0)[circle, draw,inner sep=0pt, minimum width=5pt,label=180:$v_2$]{};
\node (3) at (2,0)[circle, draw,inner sep=0pt, minimum width=5pt,label=0:$v_{j}$]{};
\node (4) at (-.25,.75)[circle, draw,inner sep=0pt, minimum width=5pt,label=160:$x$]{};
\node (5) at (1,1.5)[circle, draw,inner sep=0pt, minimum width=5pt,label=90:$y$]{};
\node (6) at (3.5,0)[circle, draw,inner sep=0pt, minimum width=5pt,label=0:$v_{j+1}$]{};
\node (7) at (2.25,.75)[circle, draw,inner sep=0pt, minimum width=5pt,label=20:$w$]{};

\node (C) at (1,-1.2)[]{$v_{j+1}Cv_1$};
\node (P) at (1,-.3)[]{$v_{2}Cv_j$};

\foreach \from/\to in {1/4,5/2,3/5,5/7,7/6}
\draw [->, shorten <=2pt, shorten >=2pt, >=stealth, line width=.7pt] (\from) to [bend right = 0] (\to);

\foreach \from/\to in {4/5}
\draw [->, shorten <=1pt, shorten >=1pt, >=stealth, line width=.4pt] (\from) to [bend left = 0] (\to);

\foreach \from/\to in {6/1}
\draw [->, dashed, shorten <=1pt, shorten >=1pt, >=stealth, line width=.4pt] (\from) to [bend left = 80] (\to);

\foreach \from/\to in {2/3}
\draw [->, dashed, shorten <=1pt, shorten >=1pt, >=stealth, line width=.4pt] (\from) to [bend left = 0] (\to);

\end{tikzpicture}
\caption{The case $z = y$ and $w \neq x$.} \label{Cf16}
\end{center}
\end{figure}

\end{enumerate}

The three remaining cases are $z = x$ and $w = y$, $z \neq x$ and $w = y$ and finally $z = x$ and $w \neq y$ . This configurations are depicted in Figure \ref{Cf17} (a), (b) and (c), respectively. It is straightforward to check that the result is true when $j = 3$ or $j = n-1$, so we can assume that $3 < j < n-1$.

\begin{figure}[ht]
\begin{center}
\begin{tikzpicture}


\node (1) at (-1,0)[circle, draw,inner sep=0pt, minimum width=5pt,label=180:$v_1$]{};
\node (2) at (0,0)[circle, draw,inner sep=0pt, minimum width=5pt,label=180:$v_2$]{};
\node (3) at (1.3,0)[circle, draw,inner sep=0pt, minimum width=5pt,label=0:$v_{j}$]{};
\node (4) at (.0,1.5)[circle, draw,inner sep=0pt, minimum width=5pt,label=90:$x$]{};
\node (5) at (1.3,1.5)[circle, draw,inner sep=0pt, minimum width=5pt,label=90:$y$]{};
\node (6) at (2.3,0)[circle, draw,inner sep=0pt, minimum width=5pt,label=0:$v_{j+1}$]{};

\node (C) at (.65,-1.5)[]{(a)};
\node (P) at (6.65,-1.5)[]{(b)};
\node (P) at (3.65,-5)[]{(c)};

\node (7) at (5,0)[circle, draw,inner sep=0pt, minimum width=5pt,label=180:$v_1$]{};
\node (8) at (6,0)[circle, draw,inner sep=0pt, minimum width=5pt,label=180:$v_2$]{};
\node (9) at (7.3,0)[circle, draw,inner sep=0pt, minimum width=5pt,label=0:$v_{j}$]{};
\node (10) at (6.65,1.5)[circle, draw,inner sep=0pt, minimum width=5pt,label=90:$y$]{};
\node (11) at (6.975,.75)[circle, draw,inner sep=0pt, minimum width=5pt,label=180:$z$]{};
\node (12) at (8.3,0)[circle, draw,inner sep=0pt, minimum width=5pt,label=0:$v_{j+1}$]{};
\node (13) at (5.825,.75)[circle, draw,inner sep=0pt, minimum width=5pt,label=180:$x$]{};

\node (C) at (.65,-.25)[]{$v_{2}Cv_j$};
\node (P) at (6.65,-.25)[]{$v_{2}Cv_j$};
\node (P) at (3.65,-3.75)[]{$v_{2}Cv_j$};

\node (C) at (.65,-.85+.05)[]{$v_{j+1}Cv_1$};
\node (P) at (6.65,-.85+.05)[]{$v_{j+1}Cv_1$};
\node (P) at (3.65,-4.35+.05)[]{$v_{j+1}Cv_1$};

\node (14) at (2,-3.5)[circle, draw,inner sep=0pt, minimum width=5pt,label=180:$v_1$]{};
\node (15) at (3,-3.5)[circle, draw,inner sep=0pt, minimum width=5pt,label=180:$v_2$]{};
\node (16) at (4.3,-3.5)[circle, draw,inner sep=0pt, minimum width=5pt,label=0:$v_{j}$]{};
\node (20) at (3.65,-2)[circle, draw,inner sep=0pt, minimum width=5pt,label=90:$x$]{};
\node (18) at (4.475,-2.75)[circle, draw,inner sep=0pt, minimum width=5pt,label=0:$w$]{};
\node (19) at (5.3,-3.5)[circle, draw,inner sep=0pt, minimum width=5pt,label=0:$v_{j+1}$]{};
\node (17) at (3.325,-2.75)[circle, draw,inner sep=0pt, minimum width=5pt,label=190:$y$]{};

\foreach \from/\to in {1/4,5/2,3/4,4/5,5/6,7/13,13/10,10/8,9/11,11/10,14/20,16/20,20/18,20/17,17/15,18/19}
\draw [->, shorten <=3pt, shorten >=3pt, >=stealth, line width=.7pt] (\from) to [bend right = 0] (\to);

\foreach \from/\to in {10/12}
\draw [->, shorten <=1pt, shorten >=1pt, >=stealth, line width=.4pt] (\from) to [bend left = 0] (\to);

\foreach \from/\to in {6/1,12/7,19/14}
\draw [->, dashed, shorten <=1pt, shorten >=1pt, >=stealth, line width=.4pt] (\from) to [bend left = 90] (\to);

\foreach \from/\to in {2/3,8/9,15/16}
\draw [->, dashed, shorten <=1pt, shorten >=1pt, >=stealth, line width=.4pt] (\from) to [bend left = 0] (\to);

\end{tikzpicture}
\caption{The three remaining cases.} \label{Cf17}
\end{center}
\end{figure}
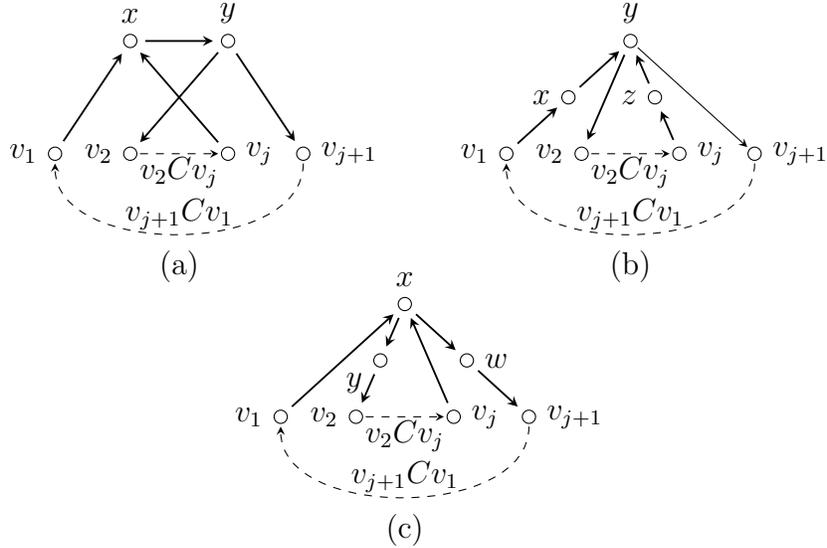

Notice that if $S_1 \cap S_2 \neq \varnothing$ and $S_{j-1} \cap S_j \neq \varnothing$, the $T_1$ and $T_2$ intersect in an $\omega$-configuration, just like $T_{j-1}$ and $T_j$. Here, the minimality of $j-1$ implies that the only possible case is  $w = x$ and $z  \neq y$, which is already covered. This means that $S_1 \cap S_2 \neq \varnothing$ and $S_{j-1} \cap S_j \neq \varnothing$ cannot occur simultaneously. 

Also, if either $S_1 \cap S_2 \neq \varnothing$ or $S_{j-1} \cap S_j \neq \varnothing$, then the case $z = x$ and $w = y$ is excluded due to the minimality of $j-1$.

First, suppose that $S_1 \cap S_2 \neq \varnothing$ and therefore $T_1$ and $T_2$ intersect in an $\omega$-configuration. Let $T_2 = (v_2, y,u,v_3)$. 

If $z = x$ and $w \neq y$, we have that $(v_1, v_{j+1}), (v_j,y), (y,v_3) \in A(H)$. Let $B_1 = (v_1, v_{j+1}) \cup v_{j+1}Cv_1$ and $B_2 = (v_j, y, v_3) \cup v_3Vv_j$. Since $B_1$ and $B_2$ are cycles with length less than $n$, each has a symmetric arc. 

If an arc of $B_2$ other than $(v_j,y)$ or $(y,v_3)$ is symmetric, we are done, thus we can consider that one of them is the symmetric arc. If $(y,v_3)$ is symmetric, then there is a directed path of length at most three from $v_3$ to $y$. Call that directed path $P$. If we take $Q = (y,u,v_3)$, we have that the arcs $(y,u)$ and $(u,v_3)$ are symmetric arcs in $D$ thanks to Lemma \ref{lem1}, so the arc $(v_2,v_3)$ is a symmetric arc in $C$. Hence, assume that $(v_j,y)$ is symmetric. In an analogous way, we get that $(v_j,x)$ and $(x,y)$ are symmetric arcs in $D$.

We can assume that the symmetric arc in $B_1$ is $(v_1,v_{j+1})$, since otherwise we would be done. This means there is a directed path from $v_{j+1}$ to $v_1$ in $D$, so by Lemma \ref{lem2} we have that either $(v_1,x)$ is symmetric or both $(x,w)$ and $(w,v_{j+1})$ are. In the former case, we have $(v_1,v_2)$ is a symmetric arc in $C$. In the later, we get that $(v_j,v_{j+1})$ is the symmetric in $C$.

If $z \neq y$ and $w = x$, we also have that $(v_1, v_{j+1}), (v_j,y), (y,v_3) \in A(H)$. Again, take $B_1 = (v_1, v_{j+1}) \cup v_{j+1}Cv_1$ and $B_2 = (v_j, y, v_3) \cup v_3Cv_j$ and, by the same reasons, assume that the symmetric arc in $B_1$ is $(v_1,v_{j+1})$, and  either $(v_j,y)$ or $(y,v_3)$ is the symmetric arc in $B_2$. 

In this case, Lemma \ref{lem1} guarantees that both $(v_1,x)$ and $(x,v_{j+1})$ are symmetric arcs in $D$. In the same way as in the previous case, if $(y,v_3)$ is symmetric, then $(v_2,v_3)$ is a symmetric arc in $C$. On the other hand, if $(v_j,y)$ is symmetric, Lemma \ref{lem2} gives us that either $(x,y)$ is a symmetric arc in $D$ and thus $(v_1,v_2)$ is a symmetric arc in $C$, or both $(v_j,z)$ and $(z,x)$ are symmetric in $D$, implying that $(v_j,v_{j+1})$ is a symmetric arc in $C$.

Now, suppose that $S_{j-1} \cap S_j \neq \varnothing$ and $T_{j-1}$ and $T_j$ intersect in an $\omega$-configuration. Let $T_{j-1} = (v_2,u,z,v_3)$. It cannot be that $z = x$, because it would contradict the minimality of $j-1$. So it remains to see what happens when $z \neq x$ and $w = y$. In this case, we have that $(v_{j-1}, z), (z,v_2), (v_1,v_{j+1}) \in A(H)$. Take $B_1 = (v_1, v_{j+1}) \cup v_{j+1}Cv_1$ and $B_2 = (v_{j-1}, z, v_2) \cup v_2Cv_{j-1}$. Both $B_1$ and $B_2$ have a symmetric arc, and we can assume that $(v_1,v_{j+1})$ is the symmetric arc in $B_1$ and the one in $B_2$ is either $(v_{j-1}, z)$ or $(z,v_2)$.

If $(v_{j-1}, z)$ is the symmetric arc in $B_2$, then an application of Lemma \ref{lem1} yields that $(v_{j-1},v_{j})$ is a symmetric arc in $C$. If $(z,v_2)$ is the symmetric arc in $B_2$, again Lemma \ref{lem1} gives us that $(z,y)$ and $(y,v_2)$ are symmetric arcs of $D$. 

On the other hand, since $(v_1,v_{j+1})$ is the symmetric arc in $B_1$, applying Lemma \ref{lem2} shows that either $(y,v_{j+1})$ is a symmetric arc in $D$ and thus $(v_j,v_{j+1})$, or both $(v_1,x)$ and $(x,y)$ are symmetric arcs in $D$, hence $(v_1,v_2)$.

For the last part of the proof, we can assume that $S_1 \cap S_2 = \varnothing$ and $S_{j-1} \cap S_j = \varnothing$. The observation about proper $\omega$-blocks that will be the key for the following arguments is this: if one arc in $\iota_{I}$ is symmetric, then there is a symmetric arc in $C$. 

Notice that in the three remaining cases there is a directed path from $v_j$ to $v_2$ of length three contained in $A(T_1) \cup A(T_j)$. Call it $P$. There is as well a path of length three from $v_1$ to $v_{j+1}$ contained in $A(T_1) \cup A(T_j)$, which we will call $Q$. Also there are $i,k$ such that $1 < i < k < j$ and $S_i \cap S_k \neq \varnothing$, then $i + 1 = k$ and there is a proper $\omega$-block $I$ such that $i,k \in I$ and $I \subseteq V(C')$, where $C' = C[\left\{v_2, v_3, \dots, v_{j-1}\right\}]$. Just like before, this means that for every $(v_i, v_{i+1}) \in A(C')$ exactly one of the following conditions is fulfilled:

\begin{itemize}

\item $(v_i, v_{i+1}) \in A(D)$.

\item $S_i \cap S_j = \varnothing$ for every $j \neq i$.

\item There is an $\omega$-block $I$ such that $v_i,v_{i+1} \in I$.

\end{itemize}

Let $\mathfrak{L}, \mathfrak{K}, \Omega, \mathfrak{O}$ and $\alpha$ be defined as follows:

\begin{itemize}

\item $\mathfrak{L} = \left|\left\{T_i: 2 \leq i \leq j, \left\|T_i\right\| = 3\right\}\right|$.

\item $\mathfrak{K} = \left|\left\{T_i: 2 \leq i \leq j, \left\|T_i\right\| = 2\right\}\right|$.

\item $\Omega$ be the set of all the $\omega$-blocks contained in $V(C')$.

\item $\mathfrak{O} = \left|\Omega\right|$.

\item $\alpha =  \left\{(x,y) \in A(C') : \left\{x,y\right\} \not\subset I, I \in \Omega \right\}$.

\end{itemize}

Simple calculations show that the cycle induced by $$\left[A(C')\cap A(D)\right] \cup \left[\bigcup_{(v_i,v_{i+1}) \in \alpha} A(T_i)\right] \cup \left[\bigcup_{I \in \Omega} \iota_I\right] \cup \left[\bigcup_{I \in \Omega} \epsilon_I\right] \cup A(P),$$ which we will call $B_2$, has length $\left\| B_2 \right\| = j + \mathfrak{K} + 2\mathfrak{L} + 2\mathfrak{O} + 1$.

Since $\mathfrak{O} + \mathfrak{L} \leq j-2$ and $\mathfrak{O} + \mathfrak{L} + \mathfrak{K} \leq j-2$, we have the following inequalities:

\begin{center}
\begin{eqnarray*}
\mathfrak{K} + 2\mathfrak{L} + 2\mathfrak{O} &\leq& 2j-4\\
	3\mathfrak{K} + 6\mathfrak{L} + 6\mathfrak{O} &\leq& 2j + 2\mathfrak{K} + 4\mathfrak{L} + 4\mathfrak{O} - 4\\
	\mathfrak{K} + 2\mathfrak{L} + 2\mathfrak{O} &\leq& \frac{2}{3} \left(j + \mathfrak{K} + 2\mathfrak{L} + 2\mathfrak{O} - 2 \right)\\
	\mathfrak{K} + 2\mathfrak{L} + 2\mathfrak{O}  &\leq& \frac{2}{3} \left(j + \mathfrak{K} + 2\mathfrak{L} + 2\mathfrak{O}\right) -\frac{4}{3}\\
	\mathfrak{K} + 2\mathfrak{L} + 2\mathfrak{O} &\leq& \frac{2}{3} \left(j + \mathfrak{K} + 2\mathfrak{L} + 2\mathfrak{O}+ 1 -1 \right) - \frac{4}{3}\\
	\mathfrak{K} + 2\mathfrak{L} + 2\mathfrak{O}  &\leq& \frac{2}{3} \left(j + \mathfrak{K} + 2\mathfrak{L} + 2\mathfrak{O}+ 1 \right) -\frac{2}{3} - \frac{4}{3}\\
	\mathfrak{K} + 2\mathfrak{L} + 2\mathfrak{O} &\leq& \frac{2}{3} \left(j + \mathfrak{K} + 2\mathfrak{L} + 2\mathfrak{O}+ 1 \right) -2
\end{eqnarray*}
\end{center}

By adding $3$ to the right side of the inequality, we get 

\begin{center}
\begin{eqnarray*}
	\mathfrak{K} + 2\mathfrak{L} + 2\mathfrak{O}  &\leq& \frac{2}{3} \left(j + \mathfrak{K} + 2\mathfrak{L} + 2\mathfrak{O} + 1 \right) -2 +3\\
		&\leq& \frac{2}{3} \left(j + \mathfrak{K} + 2\mathfrak{L} + 1 \right) + 1
\end{eqnarray*}
\end{center}

which is the lower bound for the number of symmetric arcs in $D$ of the cycle $B$. If an arc other than the ones in $A(P)$ is symmetric, then $C$ has a symmetric arc. Hence, assume that the three symmetric arcs of $B_2$ are the ones in $A(P)$.

On the other hand, consider the cycle $B_1 = (v_1, v_{j+1}) \cup v_{j+1}Cv_1$. It has length less than $n$, so it has a symmetric arc and we can assume it is $(v_1,v_{j+1})$. Here, an application of Lemma \ref{lem2} yields that at least two arc in $Q$ are symmetric. It is very simple now to verify that the symmetric arc in $P$ plus the two symmetric arcs in $Q$ guarantee the existence of a symmetric arc in $C$.

\end{proof}

Again, the fact that every cycle of $\mathcal{C}^3(D)$ has a symmetric arc implies it is kernel perfect due to Duchet's result. By applying Lemma \ref{k-clo} to the digraph $D$ we get that $D$ has a $4$-kernel, so we have the following result:

\begin{cor}
Let $D$ be a digraph. If every directed cycle $B$ in $D$ has at least $ \frac{2}{3} \left\| B \right\|  + 1$ symmetric arcs, then $D$ has a $4$-kernel.
\end{cor}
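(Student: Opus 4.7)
The plan is to obtain this corollary as an immediate consequence of Theorem \ref{4k}, combined with Duchet's Theorem \ref{Duchet} and the closure reduction in Lemma \ref{k-clo} applied with $k=4$. The reduction in Lemma \ref{k-clo} converts the task of finding a $4$-kernel in $D$ into the task of finding a kernel in $H = \mathcal{C}^{3}(D)$, so it suffices to verify that $H$ admits a kernel.

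First, I would invoke Theorem \ref{4k} with the hypothesis of the corollary, which yields that every directed cycle of $H = \mathcal{C}^{3}(D)$ contains at least one symmetric arc. This is precisely the hypothesis of Duchet's Theorem \ref{Duchet} applied to $H$. Hence $H$ is kernel-perfect; in particular, $H$ itself has a kernel.

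Finally, using the backward direction of Lemma \ref{k-clo} (with $k = 4$), the existence of a kernel in $\mathcal{C}^{3}(D)$ gives the existence of a $4$-kernel in $D$, which is what we want. There is essentially no obstacle to this plan: all the difficulty is already absorbed into Theorem \ref{4k}, so the corollary is just a three-step chain of already-established implications, and the only point worth being explicit about is that Lemma \ref{k-clo} is an \emph{if and only if} so we only need one of its directions here.
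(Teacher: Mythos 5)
Your proposal is correct and follows exactly the paper's own route: Theorem \ref{4k} shows every cycle of $\mathcal{C}^{3}(D)$ has a symmetric arc, Duchet's Theorem \ref{Duchet} then gives that $\mathcal{C}^{3}(D)$ is kernel-perfect and hence has a kernel, and the backward direction of Lemma \ref{k-clo} with $k=4$ yields a $4$-kernel of $D$. Nothing is missing.
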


\section{Conclusions}

We have proved general sufficient conditions for the existence of $3$- and $4$-kernels in digraphs.   Moreover,
these are the first such results since the generalization of Richardson's Theorem due to Kwa\'snik in 1981.    Also,
if true, Conjecture \ref{Conj-k} would give general sufficient conditions for the existence of $k$-kernels in general,
and it would be a natural generalization of Duchet's Theorem (Theorem \ref{Duchet}), again, a result from 1980.

By analyzing the proof of Theorem \ref{4k}, it becomes clear that we need to find a new strategy if we want to prove the proposed conjecture. It is not hard to see that several $\omega$-like configurations will emerge if the same technique is used to study other particular cases of the conjecture, as well as the general case. 

However, provided the conjecture is true, it may become a very useful tool in the study of $k$-kernels. A possible line of work, side by side with the conjecture, is to find (whenever possible) generalizations to $k$-kernels of the results that make use of Duchet's Theorem.





\end{document}